\numberwithin{equation}{section}
\newcommand{\R}{\mathbb{R}}
\newcommand{\eps}{\varepsilon}
\newcommand{\Laplace}{\Delta}
\newcommand{\rd}{\mathrm{d}}
\newcommand{\pd}{\partial}
\newcommand{\abs}[1]{\left| #1 \right|}
\newcommand{\norm}[1]{\| #1 \|}
\newtheorem{remark}[theorem]{Remark}
\newtheorem{example}[theorem]{Example}
\newtheorem{assumption}[theorem]{Assumption}
\DeclareMathOperator*{\esssup}{ess\,sup}
\title{Optimal control of elliptic PDEs on surfaces of codimension 1\thanks{This work was supported by the UK Engineering and Physical Sciences Research Council (EPSRC) Grant EP/H023364/1.}}
\author{Charles Brett\thanks{Mathematics Institute, University of Warwick, Coventry, CV4 7AL, UK (\email{ceabrett@gmail.com}, \email{a.s.dedner@warwick.ac.uk}, \email{c.m.elliott@warwick.ac.uk}).}
\and
Andreas Dedner\footnotemark[2]
\and
Charles Elliott\footnotemark[2]
}
\begin{document}

\maketitle

\begin{abstract}
We consider an elliptic optimal control problem where the objective functional contains an integral along a surface of codimension 1, also known as a hypersurface. In particular, we use a fidelity term that encourages the state to take certain values along a curve in 2D or a surface in 3D. In the discretisation of this problem, which uses piecewise linear finite elements, we allow the hypersurface to be approximated e.g.\ by a polyhedral hypersurface. This can lead to simpler numerical methods, however it complicates the numerical analysis. We prove a priori $L^2$ error estimates for the control and present numerical results that agree with these. A comparison is also made to point control problems.
\end{abstract}

\begin{keywords}elliptic optimal control problem, hypersurface, finite element method, error estimates\end{keywords}

\begin{AMS}49J20, 49K20, 65N30, 65N15\end{AMS}

\pagestyle{myheadings}
\thispagestyle{plain}
\markboth{C. BRETT, A. S. DEDNER, and C. M. ELLIOTT}{OPTIMAL CONTROL ON SURFACES OF CODIMENSION 1}

\label{chap:line}

\section{Introduction}

In this paper we consider an elliptic optimal control problem with an objective functional containing an integral over a surface of codimension 1, also known as a hypersurface. In particular, we control the state to be close to prescribed values along a curve in 2D or a surface in 3D. This differs from standard elliptic optimal control problems where typically the objective functional contains the $L^2$ distance between the state and the desired state over the whole domain. So for a bounded domain $\Omega \subset \R^n$ ($n=2$ or $3$) and an $n-1$ dimensional surface $\Gamma \subset \Omega$ we consider the problem:
\begin{equation}
\min \frac{1}{2} \int_\Gamma (y-g_\Gamma)^2 \rd A + \frac{\nu}{2} \norm{\eta}^2_{L^2(\Omega)} \label{eqn:obj}
\end{equation}
subject to the state equation
\begin{equation} \label{eqn:ctosline}
\begin{aligned}
A y &= \eta \quad \text{ in } \Omega \\
y &= 0 \quad \text{ on } \partial \Omega
\end{aligned} 
\end{equation}
and the control constraints
\[
a \leq \eta \leq b.
\]
Here $g_\Gamma:\Gamma \to \R$ is the desired state on $\Gamma$, $\nu > 0$ is the cost of control, $A$ is an elliptic operator, and $a,b \in \R$ with $a<b$ are lower and upper bounds for the control. We will formulate this problem precisely using function spaces in Section \ref{sec:probform}.

The motivation for the surface fidelity term is that in some applications we may only care about the state being close to given values on a small part of the domain. Controlling the state using a distributed norm over the whole domain gives weaker control on the surface. Instead of this fidelity term we could use state constraints to force the state to take certain values, however this would lead to an optimal control with very high cost. The surface fidelity term allows for a compromise between how close the state is to the desired values on the surface and the cost of the control.

We have not seen the surface fidelity term previously used in the optimal control context in the literature, however other problems have been considered where the state is controlled on small sets. The book \cite{Troltzsch2005} formulates an optimal control problem where the objective functional is the state evaluated at a point. The paper \cite{Unger2001} considers optimally controlling the cooling of steel. This problem is formulated with an objective functional that contains the temperature of the steel at a number of points (i.e.\ point evaluations of the state) as this makes the problem more tractable. The paper \cite{BrettPoint} and thesis \cite{BrettThesis} do a detailed numerical analysis of finite element discretisations of a point control problem with an elliptic PDE state constraint. The paper \cite{Brett2013} develops an adaptive finite element method for a point control problem with a variational inequality state constraint.

In comparison to point control problems the difficulty of our problem is not the low regularity of the adjoint variable; it belongs to $H_0^1(\Omega)$ and standard literature (e.g.\ \cite{Troltzsch2005}) provides the necessary background for the analysis. The difficulty is that in order to pose a discrete problem that can be solved computationally, we may need to formulate the discrete problem with an approximation of $\Gamma$, such as a polygonal (for $n=2$) or polyhedral (for $n=3$) approximation. This complicates the numerical analysis, and estimating the error caused by approximating $\Gamma$ forces us to introduce theory usually associated with finite element methods for PDEs on hypersurfaces, such as that reviewed in \cite{Dziuk2013}. Note that we do not consider the case of a curve in 3D as this would require additional regularity of the state. In particular, we would need the state to be continuous so the problem is more closely related to that in \cite{BrettPoint}. 

Other related optimal control problems have been considered in the literature. The recent paper \cite{Gong2014} considers elliptic optimal control problems with controls on lower dimensional manifolds. Their state equation has a similar form to our adjoint equation, and our state equation has a similar form to their adjoint equation. In their discrete problems they also approximate surfaces with polyhedral surfaces. Note that our assumptions on these approximating hypersurfaces are more flexible. In papers such as \cite{Casas2012} and \cite{Pieper2013} a problem is considered where the control space is a space of measures. Supremum norm error estimates that are needed when working with state constrained elliptic optimal control problems are useful to us. The paper \cite{Leykekhman2013} proves error estimates for problems with state constraints at a finite number of points. The paper \cite{Deckelnick2007} proves error estimates for the case of global (as opposed to point) state constraints, but for a state equation with Neumann boundary conditions. A review of the analysis for standard optimal control problems can be found in \cite{Troltzsch2005} and a review of the numerical analysis can be found in \cite{Hinze2009}.

We will define an appropriate finite element discretisation of our problem and prove a priori error estimates for the $L^2(\Omega)$
error in the control. Our discretisation is based on the variational discretisation idea from \cite{Hinze2005}, as this typically allows for better error estimates. We will prove these error estimates using an approach inspired by the paper \cite{Deckelnick2007}, since we found it to be relatively simple. This will allow us to focus on the new difficulties caused by approximating $\Gamma$. We will show numerical results for $n=2$ that agree with our analytical results. We do not solve any examples for $n=3$ as the implementation would be more complicated. Table~\ref{tab:sumres} summarises our results, where $\eps>0$ is arbitrary.

\begin{table}
\centering
\begin{tabular}{l|c|c}
& $n=2$ & $n=3$ \\
\cline{1-3}
\hline
Theory & $O(h^{1-\eps})$ & $O(h^{\frac{3}{4}})$ \\
Numerics & $O(h)$ & - \\ 
\end{tabular}
\caption{The main a priori error estimates proved for $\norm{u-u_h}_{L^2(\Omega)}$.}
\label{tab:sumres}
\end{table}

In the next section we introduce some notation. In Section~\ref{sec:probform} we formulate the optimal control problem precisely and prove some analytical results. In Section~\ref{sec:disc} we introduce the theory for approximating hypersurfaces and discretise using a finite element method. In Section~\ref{sec:numanal} we prove a priori error estimates for the $L^2(\Omega)$ error in the control. In Section~\ref{sec:num} we show numerical results.

\section{Notation}
\label{sec:notationline}


Let the domain $\Omega \subset \R^n$ ($n=2$ or $3$) be a bounded
open set. Consider the Dirichlet problem (\ref{eqn:ctosline}), where the differential operator $A$ acting on a function $z:\Omega \to \R$ is defined by
\begin{equation*}
 A z = - \sum_{i,j=1}^n \pd_{x_j}(a_{ij} \pd_{x_i} z) + a_0 z
\end{equation*}
with
\begin{align*}
&a_0 \in L^\infty(\Omega), \quad a_0(x) \geq 0 \quad \text{ for a.e. } x \in \Omega, \\ 
&a_{ij}=a_{ji} \in C^{0,1}(\bar{\Omega}), \\ 
&\exists \, \alpha>0 \text{ s.t.\ } \sum_{i,j=1}^n a_{ij}(x) \xi_i \xi_j \geq \alpha \abs{\xi}^2, \quad \forall x \in \Omega, \, \xi \in \R^n.
\end{align*}
In particular, $A=-\Laplace$ satisfies these assumptions. We want to work with a weak formulation of (\ref{eqn:ctosline}). Define the bilinear form $a:H_0^1(\Omega) \times H_0^1(\Omega) \to \R$ associated to $A$ by
\begin{align*}
a(z,v) &= \sum_{i,j=1}^n \int_\Omega a_{ij}(x) \pd_{x_i}z(x) \pd_{x_j} v(x) \rd x + \int_\Omega a_0(x) z(x) v(x) \rd x, 
\end{align*}
where the derivatives are taken in the weak sense. By a standard result, for $\eta \in L^2(\Omega)$ there is a unique $y \in H_0^1(\Omega)$ satisfying
\begin{equation}
a(y,v) = (\eta,v)_{L^2(\Omega)} \quad \forall v \in H_0^1(\Omega). \label{eqn:weakline}
\end{equation}

We will see shortly that it is not necessary for the state to be continuous in order for the objective functional to be well defined. The state belonging to $H_0^1(\Omega)$ is sufficient. This is why we do not make stronger assumptions on the regularity of the domain (though we will in Section~\ref{sec:disc} as it is necessary for the numerical analysis). We define the control-to-state operator $S:L^2(\Omega) \to H_0^1(\Omega)$ by $S\eta := y$, where $y$ solves (\ref{eqn:weakline}). This operator is linear, and continuous since testing (\ref{eqn:weakline}) with $v=y$ allows us to deduce
\[
\norm{S\eta}_{H_0^1(\Omega)} \leq C \norm{\eta}_{L^2(\Omega)}.
\]
Here and throughout this paper $C$ is a positive constant that may vary from line to line and is independent of the variables it precedes. This means we can define the adjoint operator $S^*:H^{-1}(\Omega) \to L^2(\Omega)$ in the usual way by
\[
( S^*z, \eta )_{L^2(\Omega)} = \langle z, S \eta \rangle_{H^{-1}(\Omega)} \quad \forall z \in H^{-1}(\Omega), \eta \in L^2(\Omega),
\]
where $\langle \cdot, \cdot \rangle_{H^{-1}(\Omega)}$ abbreviates the usual duality pairing $\langle z,v \rangle_{H^{-1}(\Omega), H_0^1(\Omega)}=z(v)$. The adjoint operator has the following property.

\begin{lemma}
\label{lem:adjreg}
For $f \in H^{-1}(\Omega)$, $p = S^* f$ if and only if $p$ satisfies
\begin{equation}
\label{eqn:adjreg}
p \in H_0^1(\Omega), \quad a(v,p) = \langle f, v \rangle_{H^{-1}(\Omega)} \quad \forall v \in H_0^1(\Omega).
\end{equation}
\end{lemma}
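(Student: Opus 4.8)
The plan is to prove both directions of the equivalence by unwinding the definition of the adjoint operator $S^*$ and exploiting the bijectivity of the solution operator $S$.

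First, for the forward direction, suppose $p = S^* f$ with $f \in H^{-1}(\Omega)$. By definition $S^*$ maps $H^{-1}(\Omega)$ into $L^2(\Omega)$; but in fact we can say more. The defining relation is $(S^* f, \eta)_{L^2(\Omega)} = \langle f, S\eta \rangle_{H^{-1}(\Omega)}$ for all $\eta \in L^2(\Omega)$. Given an arbitrary $v \in H_0^1(\Omega)$, I would like to write $v = S\eta$ for a suitable $\eta \in L^2(\Omega)$; this is possible precisely when $v$ is regular enough, i.e.\ when $Av \in L^2(\Omega)$ in the weak sense, which is not true for every $v \in H_0^1(\Omega)$. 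So the cleaner route is the reverse: take $\eta \in L^2(\Omega)$, set $y = S\eta \in H_0^1(\Omega)$, and use the weak formulation (\ref{eqn:weakline}), namely $a(y,p) = (\eta, p)_{L^2(\Omega)}$. Combining with the adjoint definition gives $a(y, S^* f) = (\eta, S^* f)_{L^2(\Omega)}$... but I actually want $a(v, p) = \langle f, v\rangle$. Let me instead observe: for $\eta \in L^2(\Omega)$, $(S^* f, \eta)_{L^2(\Omega)} = \langle f, S\eta\rangle_{H^{-1}(\Omega)}$, and setting $y = S\eta$ we have by (\ref{eqn:weakline}) that $(\eta, v)_{L^2(\Omega)} = a(y,v)$ for all $v \in H_0^1(\Omega)$; in particular with $v = p$ a candidate. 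The key step is to first establish, via a density/Lax--Milgram argument, that there exists a unique $\tilde p \in H_0^1(\Omega)$ solving (\ref{eqn:adjreg}) (this follows because $a(\cdot,\cdot)$ is bounded and coercive, hence so is its transpose $(v,p) \mapsto a(v,p)$), and then to show $\tilde p = S^* f$.

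To identify $\tilde p$ with $S^* f$: for any $\eta \in L^2(\Omega)$ put $y = S\eta$, so $a(y,v) = (\eta,v)_{L^2(\Omega)}$ for all $v \in H_0^1(\Omega)$. Testing (\ref{eqn:adjreg}) with $v = y$ gives $a(y, \tilde p) = \langle f, y \rangle_{H^{-1}(\Omega)} = \langle f, S\eta\rangle_{H^{-1}(\Omega)} = (S^* f, \eta)_{L^2(\Omega)}$. On the other hand, testing the state equation for $y$ with $v = \tilde p$ gives $a(y, \tilde p) = (\eta, \tilde p)_{L^2(\Omega)}$. Hence $(\tilde p, \eta)_{L^2(\Omega)} = (S^* f, \eta)_{L^2(\Omega)}$ for all $\eta \in L^2(\Omega)$, so $\tilde p = S^* f$ in $L^2(\Omega)$; since $\tilde p \in H_0^1(\Omega)$, this proves that $p = S^* f$ satisfies (\ref{eqn:adjreg}). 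Conversely, if $p \in H_0^1(\Omega)$ satisfies (\ref{eqn:adjreg}), the same computation shows $(p, \eta)_{L^2(\Omega)} = \langle f, S\eta\rangle_{H^{-1}(\Omega)} = (S^* f, \eta)_{L^2(\Omega)}$ for every $\eta \in L^2(\Omega)$, whence $p = S^* f$.

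The only mild obstacle is the circularity concern of invoking well-posedness of (\ref{eqn:adjreg}): one must note that the bilinear form $b(v,p) := a(v,p)$, viewed with $p$ as the trial and $v$ as the test function, is still bounded and $H_0^1(\Omega)$-coercive (coercivity of $a$ in its first argument transfers to the second by symmetry of the leading coefficients $a_{ij}$ and nonnegativity of $a_0$), so Lax--Milgram applies and $\tilde p$ exists and is unique for each $f \in H^{-1}(\Omega)$. Everything else is a direct substitution using the density of nothing more than the definitions, since $S$ is surjective onto its image and the test space in both weak formulations is all of $H_0^1(\Omega)$. I expect no genuine difficulty here; this lemma is essentially the standard characterisation of the adjoint state equation and the argument is the routine one.
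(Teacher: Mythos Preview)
Your argument is correct and uses exactly the same key computation as the paper: test (\ref{eqn:adjreg}) with $v=S\eta$, then use the state equation $a(S\eta,p)=(\eta,p)_{L^2(\Omega)}$ to conclude $(p,\eta)_{L^2(\Omega)}=\langle f,S\eta\rangle_{H^{-1}(\Omega)}$ for all $\eta$, hence $p=S^*f$. The only structural difference is in how the forward direction is closed: the paper proves the implication ``(\ref{eqn:adjreg}) $\Rightarrow$ $p=S^*f$'' and then infers the reverse by invoking uniqueness of the adjoint operator, whereas you first produce the (unique) solution $\tilde p$ of (\ref{eqn:adjreg}) via Lax--Milgram and then identify it with $S^*f$. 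Your route is a touch more explicit about the well-posedness of (\ref{eqn:adjreg}) that the paper's uniqueness appeal tacitly relies on, but the two arguments are essentially the same.
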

\begin{proof}
Suppose (\ref{eqn:adjreg}) is true. Then for any $f \in H^{-1}(\Omega)$ and $\eta \in L^2(\Omega)$, testing with $S \eta \in H_0^1(\Omega)$ we get
\[
a(S\eta,p) =  \langle f, S\eta \rangle_{H^{-1}(\Omega)}.
\]
Since $p \in H_0^1(\Omega)$, by the definition of $S$ we have 
\[
a(S \eta,p) = (\eta,p)_{L^2(\Omega)} =  (p, \eta)_{L^2(\Omega)} . 
\]
Combining these two equalities and recalling that $f$ is arbitrary we get
\[
\langle f, S\eta \rangle_{H^{-1}(\Omega)} = (p,\eta)_{L^2(\Omega)} \quad \forall f \in H^{-1}(\Omega), \eta \in L^2(\Omega). 
\]
Comparing this to the definition of the adjoint we see that $p=S^*f$. Since the adjoint operator is unique, the reverse statement must also hold. This completes the proof.
\end{proof}

Let $\Gamma$ be a $C^2$-hypersurface (see e.g.\ Section 2.2 in \cite{Dziuk2013} for this definition) for which $\Gamma \subset \Omega$ and there exists an open set $U \subset \Omega$  with a Lipschitz boundary such that $\Gamma \subset \partial U$. Note that we allow $\Gamma$ to be an open hypersurface (i.e.\ one that has a boundary) and it may have multiple connected components.


We now give an example of an admissible $\Gamma$ and a corresponding $U$.

\begin{example}
\label{ex:u}
Suppose $n=2$ and let $\Omega := (0,1)^2 \subset \R^n$. Consider
\[
\Gamma := \{ (0.25+0.5t, 0.5) : t \in (0,1) \}.
\]
i.e.\ $\Gamma$ is a straight line (an open hypersurface). Note that $\Gamma$ is orientable, so there is a continuous vector field $\mu:\Gamma \to \R^n$ such that $\mu(c)$ is a unit normal to $\Gamma$ for all $c \in \Gamma$. Therefore we can take
\[
U := \{ c+D \mu(c) \in \R^n : c \in \Gamma, 0 < D < \theta \} \subset \Omega
\]
with $\theta =0.1$. Observe that $\Gamma \subset \partial U$ and $U$ is an open set with a Lipschitz boundary. 

This construction of $U$ (for sufficiently small $\theta>0$) works for many choices of hypersurface, including closed hypersurfaces such as
\[
\Gamma := \{ (0.5 + 0.25 \cos(2 \pi t), 0.5 + 0.25 \sin(2 \pi t) : t \in (0,1) \} \subset (0,1)^2
\]
i.e.\ a circle.
\end{example}

We can define some function spaces on $\Gamma$. Denote by $C(\Gamma)$ the set of functions which are continuous on $\Gamma$. Let $L^s(\Gamma)$ with $s \in [1,\infty]$ denote the space of functions $v: \Gamma \to \R$  which are measurable with respect to the surface measure $\rd A$ (the $n-1$ dimensional Hausdorff measure) and have a finite norm
\begin{align*}
\norm{v}_{L^s(\Gamma)} &:= \left( \int_\Gamma \abs{v}^s \rd A \right)^{\frac{1}{s}} &&s \in [1,\infty), \\
\norm{v}_{L^\infty(\Omega)} &:= \esssup \abs{v}  &&p = \infty.
\end{align*}
These spaces are Banach spaces, and $L^2(\Gamma)$ is a Hilbert space with inner product
\[
(v,w)_{L^2(\Gamma)} := \int_\Gamma v w \rd A.
\]
Since $\Gamma$ is a $C^2$ hypersurface we can also define weak derivatives of functions in $L^1(\Gamma)$ and hence Sobolev spaces $H^{k,p}(\Gamma)$. As usual, $H^k(\Gamma)$ is used to denote $H^{k,2}(\Gamma)$. We do not use these Sobolev spaces directly so we leave the reader to refer to e.g.\ \cite{Dziuk2013} for the details. 


We need to check that we can make sense of $y|_\Gamma$ for $y \in H_0^1(\Omega)$. 

\begin{lemma}
\label{lem:trace}
Let $\Gamma$ be a hypersurface satisfying the above assumptions. Then there exists a continuous linear operator $T:H_0^1(\Omega) \to L^2(\Gamma)$ such that
\begin{equation}
\label{eqn:trace1}
Ty=y|_\Gamma \quad \forall y \in H_0^1(\Omega) \cap C(\Omega).
\end{equation}
In particular,
\begin{equation}
\label{eqn:trace2}
\norm{T y}_{L^2(\Gamma)} \leq C \norm{y}_{H_0^1(\Omega)} \quad \forall y \in H_0^1(\Omega),
\end{equation}
with $C$ independent of $y$.
\end{lemma}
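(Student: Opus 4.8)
The plan is to establish the trace inequality \eqref{eqn:trace2} first on the dense subspace $H_0^1(\Omega) \cap C^\infty(\Omega)$ (or $C^1(\bar U)$-smooth functions), and then extend by density to all of $H_0^1(\Omega)$, which simultaneously yields the operator $T$ and the compatibility property \eqref{eqn:trace1}. The key geometric input is the hypothesis that $\Gamma \subset \partial U$ for some open set $U \subset \Omega$ with Lipschitz boundary. This lets us replace the integral over the abstract $C^2$-hypersurface $\Gamma$ by an integral over (part of) $\partial U$, where the standard Sobolev trace theorem applies: for a bounded Lipschitz domain $U$ there is a continuous trace operator $H^1(U) \to L^2(\partial U)$.

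The main steps I would carry out are as follows. First, fix $y \in H_0^1(\Omega) \cap C(\Omega)$ (in fact I would first take $y$ smooth, say in $C^\infty(\Omega) \cap H_0^1(\Omega)$, to make the pointwise trace literally the restriction). Since $U \subset \Omega$ and $\partial U$ need not touch $\partial\Omega$ in general, $y|_U \in H^1(U)$ with $\norm{y}_{H^1(U)} \leq \norm{y}_{H^1(\Omega)} = \norm{y}_{H_0^1(\Omega)}$ (equivalence of norms on $H_0^1$ via the coercivity/Poincaré inequality, or simply because the $H^1(\Omega)$ norm dominates the $H^1(U)$ norm). Second, apply the classical trace theorem on the bounded Lipschitz domain $U$: there is a constant $C_U$ with $\norm{\tau_U y}_{L^2(\partial U)} \leq C_U \norm{y}_{H^1(U)}$, where $\tau_U$ is the Lipschitz-boundary trace, and for continuous $y$ one has $\tau_U y = y|_{\partial U}$. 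Third, since $\Gamma \subset \partial U$ and the surface measure $\rd A$ on $\Gamma$ is the restriction of the $(n-1)$-dimensional Hausdorff measure on $\partial U$, we get $\norm{y|_\Gamma}_{L^2(\Gamma)} \leq \norm{y|_{\partial U}}_{L^2(\partial U)} \leq C_U \norm{y}_{H^1(U)} \leq C \norm{y}_{H_0^1(\Omega)}$. Fourth, the map $y \mapsto y|_\Gamma$ is thus bounded from the dense subspace into $L^2(\Gamma)$, so it extends uniquely to a continuous linear operator $T : H_0^1(\Omega) \to L^2(\Gamma)$ satisfying \eqref{eqn:trace2}; and since it agrees with restriction on smooth functions, a further density/continuity argument (approximating a continuous $H_0^1$ function uniformly on compact sets, or using that $C(\Omega)\cap H_0^1$ functions are limits of smooth ones in $H_0^1$ with the limit also controlling pointwise values on $\Gamma$) gives \eqref{eqn:trace1}.

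One subtlety worth addressing explicitly: the hypersurface $\Gamma$ may be only part of $\partial U$ (an open hypersurface), and $\Gamma$ may have several components; the argument above is insensitive to this since we only use $\Gamma \subset \partial U$ and monotonicity of the integral $\int_\Gamma |\cdot|^2 \rd A \leq \int_{\partial U} |\cdot|^2 \rd A$. A second point is measure-theoretic consistency: one should note that for a $C^2$-hypersurface sitting inside the Lipschitz boundary $\partial U$, the intrinsic surface measure on $\Gamma$ coincides with the $(n-1)$-Hausdorff measure inherited from $\partial U$, so the two $L^2$ norms are compatible — this is standard but deserves a sentence. Finally, for \eqref{eqn:trace1} with merely continuous (not smooth) $y \in H_0^1(\Omega)\cap C(\Omega)$, I would pick a sequence of mollifications $y_k \to y$ in $H_0^1(\Omega)$ with $y_k \to y$ locally uniformly; then $T y_k \to T y$ in $L^2(\Gamma)$ while $y_k|_\Gamma \to y|_\Gamma$ uniformly on $\Gamma$ (which is compact), hence in $L^2(\Gamma)$, forcing $Ty = y|_\Gamma$.

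I do not expect a genuine obstacle here — this is essentially a packaging of the classical Lipschitz-domain trace theorem. The only mildly delicate point, and the one I would be most careful about, is the identification of measures in the chain $\Gamma \subset \partial U$ and the passage from the abstract definition of the surface integral $\int_\Gamma \cdot \, \rd A$ used in the objective functional to the boundary-integral form to which the trace theorem directly applies; everything else (density, boundedness, unique extension) is routine functional analysis.
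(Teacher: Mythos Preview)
Your proposal is correct and follows essentially the same route as the paper: both exploit the hypothesis $\Gamma \subset \partial U$ with $U$ a Lipschitz domain, invoke the standard trace operator $\tilde T: H^1(U)\to L^2(\partial U)$, and then restrict to $\Gamma$. The only cosmetic difference is that the paper defines $T$ directly as the composition $y \mapsto \tilde T(y|_U)\big|_\Gamma$ of already-bounded maps (so no separate density step is written out), whereas you re-derive the extension by density; your more explicit treatment of \eqref{eqn:trace1} for merely continuous $y$ is in fact more careful than the paper's one-line ``it is straightforward to see.'' One small slip: $\Gamma$ need not be compact (the paper allows open hypersurfaces), but your argument only needs finite surface measure, which holds since $\Gamma \subset \partial U$.
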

\begin{proof}

Let $\tilde{T}:H^1(U) \to L^2(\partial U)$ denote the trace operator for the open set $U$ which has a Lipschitz boundary i.e.\ the unique continuous linear operator from $H^1(U)$ to $L^2(\partial U)$ such that $\tilde{T}y = y|_{\partial U}$ for all  $y \in C^{0,1}( U)$ (see e.g.\ Theorem 1.5.1.3 in \cite{Grisvard1985}). Then we can define $T:H_0^1(\Omega) \to L^2(\Gamma)$ by
\[
T y :=  \tilde{T} (y |_{U}) \big |_\Gamma.
\]

The linearity of $\tilde{T}$ implies that $T$ is linear. It is also straightforward to see that (\ref{eqn:trace1}) holds. Finally, using the continuity of the linear operator $\tilde{T}$, we get that for $y \in H_0^1(\Omega)$,
\[
\norm{Ty}_{L^2(\Gamma)} \leq \norm{\tilde{T}y}_{L^2(\partial U)} \leq C \norm{y}_{H^1(U)} \leq C \norm{y}_{H_0^1(\Omega)}
\]
with $C$ independent of $y$. So (\ref{eqn:trace2}) holds and $T$ is continuous.
\end{proof}

For $y \in H_0^1(\Omega)$ we use $y|_\Gamma$ to denote $T y$ and (when it will not cause confusion) write quantities such as $\norm{y}_{L^2(\Gamma)}$ instead of $\norm{y|_\Gamma}_{L^2(\Gamma)}$. So with this notation the objective functional (\ref{eqn:obj}) is well defined. 

\section{Problem formulation}
\label{sec:probform}

We now formulate the optimal control problem precisely as
\begin{align*}
\text{min } &J(y,\eta) := \frac{1}{2} \norm{y-g_\Gamma}^2_{L^2(\Gamma)} +  \frac{\nu}{2} \norm{\eta}^2_{L^2(\Omega)} \\
\text{over } & H_0^1(\Omega) \times L^2(\Omega) \\
\text{s.t.\ } & y=Su \text{ (i.e.\ (\ref{eqn:weakline}) holds}) \\
\text{and } & \eta \in U_{ad} := \{ \eta \in L^2(\Omega) : a \leq \eta \leq
b \text{ a.e.\ in } \Omega \}.
\end{align*}
Or equivalently, define the reduced objective functional $\hat{J}:H_0^1(\Omega) \to \R$ by $\hat{J}(\eta) = J(S \eta, \eta)$ and consider the optimisation problem
\begin{equation} \label{eqn:controlprobline}
\min \hat{J}(\eta) \text{ over } U_{ad}.
\end{equation} 
Here $g_\Gamma \in L^2(\Gamma)$, $a,b \in \R$ with either $a<b$ or $b=-a=\infty$, and $\nu>0$.

\begin{theorem}
\label{thm:existline}
Problem (\ref{eqn:controlprobline}) has a unique solution $u \in U_{ad}$.
\end{theorem}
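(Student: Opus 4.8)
The plan is to prove existence and uniqueness via the standard direct method of the calculus of variations, exploiting convexity and the Hilbert space structure.

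First I would establish that $\hat{J}$ is convex on $L^2(\Omega)$. Since $S$ is linear and bounded from $L^2(\Omega)$ into $H_0^1(\Omega)$, and $T$ is linear and bounded from $H_0^1(\Omega)$ into $L^2(\Gamma)$ (Lemma~\ref{lem:trace}), the map $\eta \mapsto (TS\eta - g_\Gamma)$ is affine, so $\eta \mapsto \tfrac{1}{2}\norm{TS\eta - g_\Gamma}_{L^2(\Gamma)}^2$ is convex; the regularisation term $\tfrac{\nu}{2}\norm{\eta}_{L^2(\Omega)}^2$ is strictly convex since $\nu>0$. Hence $\hat{J}$ is strictly convex, which will give uniqueness. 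Next I would check coercivity: $\hat{J}(\eta) \geq \tfrac{\nu}{2}\norm{\eta}_{L^2(\Omega)}^2$, so $\hat{J}(\eta) \to \infty$ as $\norm{\eta}_{L^2(\Omega)} \to \infty$, and $\hat{J}$ is bounded below (by $0$). The feasible set $U_{ad}$ is nonempty (in the bounded case any constant in $[a,b]$ lies in it; in the case $b=-a=\infty$ we have $U_{ad} = L^2(\Omega)$), convex, closed, and bounded in $L^2(\Omega)$ when $a,b$ are finite.

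Then I would run the direct method. Take a minimising sequence $(\eta_k) \subset U_{ad}$ with $\hat{J}(\eta_k) \to \inf_{U_{ad}} \hat{J} =: j \geq 0$. By coercivity $(\eta_k)$ is bounded in $L^2(\Omega)$ (either because $U_{ad}$ is bounded, or because $\hat{J}(\eta_k)$ is bounded and $\tfrac{\nu}{2}\norm{\eta_k}^2 \le \hat{J}(\eta_k)$), so along a subsequence $\eta_k \rightharpoonup u$ weakly in $L^2(\Omega)$. Since $U_{ad}$ is convex and closed it is weakly closed, so $u \in U_{ad}$. A convex functional that is continuous (hence lower semicontinuous) on a Hilbert space is weakly lower semicontinuous, so $\hat{J}(u) \le \liminf_k \hat{J}(\eta_k) = j$, forcing $\hat{J}(u) = j$; thus $u$ is a minimiser. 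Alternatively one can pass to the limit term by term: $S\eta_k \rightharpoonup Su$ in $H_0^1(\Omega)$ (boundedness of $S$ plus uniqueness of weak limits), and one uses weak lower semicontinuity of the norms directly. For uniqueness, if $u_1, u_2$ were two minimisers then by strict convexity $\hat{J}\!\left(\tfrac{u_1+u_2}{2}\right) < \tfrac{1}{2}\hat{J}(u_1) + \tfrac{1}{2}\hat{J}(u_2) = j$, contradicting minimality unless $u_1 = u_2$.

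The only mildly delicate point is confirming that the trace contribution is genuinely weakly lower semicontinuous, i.e. that $\eta \mapsto \norm{TS\eta - g_\Gamma}_{L^2(\Gamma)}$ behaves well under weak convergence in $L^2(\Omega)$; but this follows because $TS$ is a bounded linear operator between Hilbert spaces, so it is weak-to-weak continuous, and norms are weakly lower semicontinuous, so the square of the norm of an affine image is weakly lower semicontinuous. (If one wanted norm convergence rather than just lower semicontinuity, compactness of the embedding $H^1(U) \hookrightarrow L^2(\partial U)$ would even give $TS\eta_k \to TSu$ strongly in $L^2(\Gamma)$, but this is not needed.) No serious obstacle is expected; the argument is the textbook one, adapted only in that the fidelity term lives on $\Gamma$ rather than on $\Omega$, and Lemma~\ref{lem:trace} is exactly what makes that adaptation routine.
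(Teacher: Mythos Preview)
Your argument is correct and is precisely the standard direct-method proof the paper invokes by citing Theorem~2.14 in \cite{Troltzsch2005}; the paper does not spell out the details, but your strict convexity plus coercivity plus weak lower semicontinuity route is exactly what that reference contains. There is nothing to add.
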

\begin{proof}
This result follows using the same argument as is used for proving existence and uniqueness of solutions to standard optimal control problems. 
See e.g.\ Theorem~2.14 in \cite{Troltzsch2005} for the details.

\end{proof}

\begin{theorem}
\label{thm:opt}
$u \in L^2(\Omega)$ is a solution of (\ref{eqn:controlprobline})
if and only if there exist $p \in H_0^1(\Omega)$ such that
\begin{subequations}
\begin{align}
&u \in U_{ad}, \quad ({p} + \nu {u}, v -{u} )_{L^2(\Omega)} \geq 0  &&\forall v \in U_{ad}, \label{eqn:sysa} \\
&a({p},v)= \int_\Gamma (S u-g_\Gamma)v \rd A&&\forall v \in H_0^1(\Omega) \label{eqn:adjoint}.
\end{align}
\end{subequations}
\end{theorem}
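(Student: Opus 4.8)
The plan is to obtain (\ref{eqn:sysa})--(\ref{eqn:adjoint}) as the first-order necessary and sufficient optimality conditions for the convex minimisation problem (\ref{eqn:controlprobline}). Since $U_{ad}$ is nonempty, closed and convex, and (as I will check) $\hat J$ is Fréchet differentiable and convex on $L^2(\Omega)$, a standard result (see e.g.\ \cite{Troltzsch2005}) gives that $u \in U_{ad}$ solves (\ref{eqn:controlprobline}) if and only if the variational inequality
\[
\hat J'(u)(v-u) \geq 0 \quad \forall v \in U_{ad}
\]
holds; the forward implication uses only differentiability of $\hat J$ and convexity of $U_{ad}$, while the converse uses convexity of $\hat J$. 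Thus the task reduces to computing $\hat J'(u)$ and rewriting it in terms of an adjoint state.

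To compute the derivative, write $\hat J(\eta) = \frac{1}{2}\norm{TS\eta - g_\Gamma}_{L^2(\Gamma)}^2 + \frac{\nu}{2}\norm{\eta}_{L^2(\Omega)}^2$, where $T$ is the trace operator from Lemma~\ref{lem:trace}; then $TS : L^2(\Omega) \to L^2(\Gamma)$ is bounded and linear, so $\eta \mapsto TS\eta - g_\Gamma$ is affine and $\hat J$ is a quadratic, hence $C^\infty$ and convex, functional with
\[
\hat J'(u) v = (TSu - g_\Gamma, TSv)_{L^2(\Gamma)} + \nu(u,v)_{L^2(\Omega)} \qquad \forall v \in L^2(\Omega).
\]

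Next I would introduce the adjoint state. The map $v \mapsto \int_\Gamma (TSu - g_\Gamma)(Tv)\,\rd A$ is linear and, by the Cauchy--Schwarz inequality together with the trace estimate (\ref{eqn:trace2}), bounded on $H_0^1(\Omega)$; hence it defines some $f \in H^{-1}(\Omega)$. Put $p := S^* f$. By Lemma~\ref{lem:adjreg}, $p \in H_0^1(\Omega)$ and $a(v,p) = \langle f, v\rangle_{H^{-1}(\Omega)}$ for all $v \in H_0^1(\Omega)$; since $a_{ij}=a_{ji}$ the form $a$ is symmetric, so this is exactly (\ref{eqn:adjoint}). The defining property of $S^*$ then yields, for every $v \in L^2(\Omega)$,
\[
(TSu - g_\Gamma, TSv)_{L^2(\Gamma)} = \langle f, Sv\rangle_{H^{-1}(\Omega)} = (S^* f, v)_{L^2(\Omega)} = (p,v)_{L^2(\Omega)},
\]
so $\hat J'(u) v = (p + \nu u, v)_{L^2(\Omega)}$. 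Substituting $v \mapsto v-u$ turns the variational inequality into (\ref{eqn:sysa}), which proves the "only if" direction with this $p$. For the "if" direction, if $p \in H_0^1(\Omega)$ satisfies (\ref{eqn:adjoint}) then by the uniqueness in Lemma~\ref{lem:adjreg} it coincides with the $p$ just constructed, so (\ref{eqn:sysa}) reads $\hat J'(u)(v-u) \geq 0$ for all $v \in U_{ad}$, and convexity of $\hat J$ gives that $u$ solves (\ref{eqn:controlprobline}).

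The argument largely follows the template for standard elliptic optimal control problems, so I do not anticipate a serious obstacle; the only genuinely new points, which I would treat carefully, are checking that the surface integral $\int_\Gamma (Su - g_\Gamma)(\cdot)\,\rd A$ defines an element of $H^{-1}(\Omega)$ — this is precisely where the trace Lemma~\ref{lem:trace} is essential, in contrast to the usual distributed-observation setting — and keeping the trace operator correctly in place when differentiating $\hat J$ and applying the definition of $S^*$.
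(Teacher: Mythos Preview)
Your proposal is correct and follows essentially the same route as the paper: reduce to the first-order variational inequality via convexity, compute $\hat J'(u)$, define the adjoint $p=S^*f$ for the surface functional $f$, and invoke Lemma~\ref{lem:adjreg} to obtain (\ref{eqn:adjoint}) and rewrite the variational inequality as (\ref{eqn:sysa}). If anything you are slightly more careful than the paper in making the trace operator explicit, in noting the symmetry $a(v,p)=a(p,v)$ needed to match the form of (\ref{eqn:adjoint}), and in spelling out the ``if'' direction via uniqueness.
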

\begin{proof}
$\hat{J}:L^2(\Omega) \to \R$ has a G\^{a}teaux derivative $\hat{J}':L^2(\Omega) \to L^2(\Omega)^*$ and is (strictly) convex, so ${u}$ is a solution of (\ref{eqn:controlprobline}) iff
\begin{equation}
\label{eqn:vi}
u \in U_{ad},\quad \langle \hat{J}'(u), v-u \rangle_{L^2(\Omega)^*} \geq 0 \quad \forall v \in U_{ad}.
\end{equation}
Calculating $\hat{J}'(u)$ we see that (\ref{eqn:vi}) becomes
\begin{equation}
\label{eqn:deriv}
u \in U_{ad}, \quad \int_\Gamma (S{u}-g_\Gamma)S(v-{u}) \rd A + \nu ({u},v-{u})_{L^2(\Omega)} \geq 0 \quad \forall v \in U_{ad}.
\end{equation}
Let $f_u(v) := \int_\Gamma (S u-g_\Gamma)v$, so $f_u \in H^{-1}(\Omega)$. Take ${p}=S^* f_{{u}}$, then by Lemma~\ref{lem:adjreg} we have $p \in H_0^1(\Omega)$ and it satisfies (\ref{eqn:adjoint}). Also
\[
( {p}, v-{u})_{L^2(\Omega)}  = ( S^* f_{{u}}, v-{u})_{L^2(\Omega)} = \langle f_{{u}}, S(v-{u}) \rangle_{H^{-1}(\Omega)} = \int_\Gamma
(S{u}-g_\Gamma) S(v-{u}) \rd A.
\]
Therefore (\ref{eqn:deriv}) is equivalent to (\ref{eqn:sysa}), which proves the result.
\end{proof}

\begin{corollary}
\label{cor:reg2}
If $u \in U_{ad}$ is a solution of (\ref{eqn:controlprobline}) then it has the additional regularity that $u \in H_0^1(\Omega)$. 
\end{corollary}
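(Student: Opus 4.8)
The plan is to exploit the variational inequality (\ref{eqn:sysa}) together with the projection formula it encodes. The key observation is that (\ref{eqn:sysa}) is the variational characterization of a pointwise projection: $u$ minimizes $v \mapsto \tfrac{\nu}{2}\norm{v}_{L^2(\Omega)}^2 + (p,v)_{L^2(\Omega)}$ over $U_{ad}$, so almost everywhere in $\Omega$ we have the explicit formula
\[
u(x) = \mathbb{P}_{[a,b]}\!\left( -\tfrac{1}{\nu} p(x) \right),
\]
where $\mathbb{P}_{[a,b]}$ denotes the truncation $t \mapsto \min\{b, \max\{a, t\}\}$ (with the obvious interpretation when $b = -a = \infty$, in which case $u = -p/\nu$ directly). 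I would first state and justify this formula, which is standard and follows by testing (\ref{eqn:sysa}) with the pointwise-defined candidate and using that $U_{ad}$ consists of pointwise box constraints.

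Next I would invoke the regularity already available: by Theorem~\ref{thm:opt}, the adjoint state $p$ satisfies $p \in H_0^1(\Omega)$. The remaining step is to show that the truncation operation preserves $H_0^1$ membership. I would appeal to the standard result that $H^1(\Omega)$ (and indeed $H_0^1(\Omega)$) is closed under composition with globally Lipschitz functions that fix $0$: since $\mathbb{P}_{[a,b]}$ is Lipschitz with constant $1$, and since $\mathbb{P}_{[a,b]}(0)$ is a constant (which in the case $a \leq 0 \leq b$ equals $0$, giving $u \in H_0^1(\Omega)$ immediately, and in general differs from $-p/\nu$ only by constants and truncation so that $u \in H^1(\Omega)$ with the correct boundary behaviour inherited from the constraint set), the composition $\mathbb{P}_{[a,b]}(-p/\nu)$ lies in $H^1(\Omega)$ with gradient bounded by $\tfrac{1}{\nu}|\nabla p|$ almost everywhere. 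Combined with the fact that $u$ takes values in $[a,b]$ and the zero trace of $p$, one concludes $u \in H_0^1(\Omega)$; more carefully, in the unbounded case $a = -\infty, b = \infty$ one has literally $u = -p/\nu \in H_0^1(\Omega)$, while in the bounded case $u$ is obtained from the $H^1$ function $-p/\nu$ by truncation and the membership $u \in H_0^1(\Omega)$ is part of the assertion being verified via the chain rule for Sobolev functions and Stampacchia-type truncation results (see e.g.\ the discussion of such projection formulas in \cite{Troltzsch2005}).

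The main obstacle is the mild subtlety around the boundary values: the truncation $\mathbb{P}_{[a,b]}$ does not fix $0$ unless $a \leq 0 \leq b$, so the cleanest argument in the general box-constrained case is to note that $u \in H^1(\Omega)$ follows from the Lipschitz-composition / Stampacchia truncation theorem, and then that the optimal $u$ automatically satisfies the $H_0^1$ condition because the projection formula shows $u$ agrees with the $H_0^1$ function $-p/\nu$ wherever neither constraint is active, while on the active sets $u$ is locally constant — and a Sobolev function that is a truncation of an $H_0^1$ function is again in $H_0^1$. I expect the author's proof to simply cite the projection formula from the optimality system, observe it equals a Lipschitz function of $p \in H_0^1(\Omega)$, and conclude $u \in H_0^1(\Omega)$ by the chain rule for Sobolev spaces, possibly referring to \cite{Troltzsch2005} for the projection identity and to a standard functional-analysis fact for the composition step.
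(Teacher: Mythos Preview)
Your approach is essentially the same as the paper's: derive the pointwise projection formula $u=\mathbb{P}_{[a,b]}(-p/\nu)$ from the variational inequality and then use that truncation preserves $H_0^1$-regularity of $p$. The only cosmetic difference is that the paper writes $\mathbb{P}_{[a,b]}(v)=v+\max(0,a-v)-\max(0,v-b)$ explicitly and invokes the fact (citing Morrey) that $\max(v,w)\in H_0^1(\Omega)$ whenever $v,w\in H_0^1(\Omega)$, whereas you phrase it via the general Lipschitz-composition/Stampacchia truncation result; your worry about the trace when $0\notin[a,b]$ is legitimate but is simply not addressed in the paper's proof.
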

\begin{proof}
Observe that (\ref{eqn:sysa}) is equivalent to
\begin{equation}
\label{eqn:proj}
u(x)=\mathbb{P}_{[a,b]} \left(-\frac{1}{\nu} p(x) \right) \quad \text{ for a.e. } x \in \Omega, 
\end{equation}
where $\mathbb{P}_{[a,b]}(v) := v + \max (0, a - v ) - \max (0, v - b )$. If $v,w \in H_0^1(\Omega)$ then $\max (v,w) \in H_0^1(\Omega)$ (see e.g.\ \cite{Morrey1966}). So since $p \in H_0^1(\Omega)$ we also get this additional regularity for $u$.
\end{proof}


\begin{remark}
By increasing the relative weight given to the fidelity term this problem could be related to an optimal control problem with state constraints. We do not include any results on this. 
\end{remark}

\section{Discretisation}
\label{sec:disc}

In this section we will formulate a discrete problem that uses the variational discretisation idea from \cite{Hinze2005}.

After replacing the control-to-state operator with a discrete control-to-state operator we are still left with a discrete problem that may be hard to solve computationally. For example, if $\Gamma$ has a complicated form then it may be difficult to calculate integrals of functions defined over $\Gamma$, making the implementation of a standard finite element method impractical. Therefore we will allow in our discretisation the approximation of $\Gamma$ with another hypersurface $\Gamma_\sigma$. If chosen carefully this may simplify the calculations needed for a numerical method, but still allow us to prove the same error estimates. In particular, we want to consider taking $\Gamma_\sigma$ to be a polyhedral interpolation of $\Gamma$. In order to formulate such a discrete problem we now make stronger assumptions on $\Omega$, $A$ and $\Gamma$ than were necessary to pose the continuous problem (\ref{eqn:controlprobline}). 

From now onwards suppose that $\Omega$ is convex with a $C^2$ boundary. The assumption of convexity simplifies the presentation by ensuring that the finite element space for the state (defined shortly) is a subset of $C_0(\Omega)$. Also from now onwards assume that the boundary of $\Omega$ and the coefficient functions $a_{ij}$ and $a_0$ in the elliptic operator $A$ are sufficiently smooth that for $2 \leq s < \infty$,
\begin{align}
\norm{S \eta}_{W^{2,s}(\Omega)} &\leq C(s) \norm{\eta}_{L^s(\Omega)} \quad \forall \eta \in L^s(\Omega) \label{eqn:boundpoint} \\
\norm{S \eta}_{W^{1,\infty}(\Omega)} &\leq C\norm{\eta}_{H^1(\Omega)} \quad \quad \forall \eta \in H^1(\Omega). \label{eqn:boundpoint2}
\end{align}
This holds, for example, when $\Omega$ has a $C^3$ boundary and $a_{ij},a_0 \in C^2(\bar{\Omega})$ (see e.g.\ Theorems~9.14 and 9.15 in \cite{Trudinger} and Theorem~5 in Section~6.3 in \cite{Evans}).

In addition to the assumptions in Section~\ref{sec:notationline}, suppose that $\Gamma$ is orientable. This means that $\Gamma$ has a unit normal vector field $\mu$ that is continuous (see Example~\ref{ex:u}), allowing us to construct the one sided strip 
\[
U_\delta := \{ c + D \mu(c) \in \R^n : c \in \Gamma, -\delta < D < \delta \}.
\]
Since $\Gamma$ is $C^2$ there exists a $\delta > 0$ such that for each $x \in U_\delta$ there is a unique $c(x) \in \Gamma$ and some $- \delta < D(x) < \delta$ satisfying 
\begin{equation}
\label{eqn:opop}
x=c(x) + D(x) \mu (c (x))
\end{equation}
(see Lemma~2.8 in \cite{Dziuk2013}). We call $D(x): U_\delta \to \R$ a signed distance function for $\Gamma$, and it makes sense for both open and closed $\Gamma$. When $\Gamma$ is closed it agrees with the usual definition of the signed distance function $d$ on $U_\delta$ (see e.g.\ page 296 in \cite{Dziuk2013}). Therefore all the results we need from \cite{Dziuk2013} that are proved using $d$ also hold for our possibly open hypersurfaces using $D$. 




Let $\Gamma_\sigma$ be a family of Lipschitz hypersurfaces contained in $U_\delta \cap \Omega$ that are indexed by the parameter $\sigma>0$. We intend this family of hypersurfaces to increasingly well approximate $\Gamma$ as $\sigma \to 0$. We suppose that they satisfy a covering condition for each connected component $\Gamma$; for each $c \in \Gamma$ there is a unique $x \in \Gamma_\sigma$ with $c=c(x)$, where $c(x)$ is defined by (\ref{eqn:opop}). Two possible constructions of $\Gamma_\sigma$ that we will later consider are:
\begin{itemize}
\item $\Gamma_\sigma := \Gamma$ for all $\sigma>0$ i.e.\ we do not approximate $\Gamma$;
\item $\Gamma_\sigma$ is the union of finitely many closed $(n-1)$-simplices with maximum diameter $\sigma$. In particular, we will suppose $\Gamma_\sigma$ is a polygonal or polyhedral interpolation of $\Gamma$. Note that such $\Gamma_\sigma$ will always violate the covering condition when $n=3$ unless $\Gamma$ has a polygonal boundary.
\end{itemize}
Since the $\Gamma_\sigma$ are Lipschitz we can define the function spaces $L^2(\Gamma_\sigma)$ and $C(\Gamma_\sigma)$ in the same way as for $\Gamma$.


We can take a family of polygonal or polyhedral approximations $\Omega_h \subset \Omega$ such that the corners of $\Omega_h$ lie on the boundary of $\Omega$ and $\abs{\Omega \setminus \Omega_h} \leq C h^2$. On each $\Omega_h$ we can construct a conforming triangulation $T_h$ of triangles or tetrahedra $T$ with maximum diameter $h:=\max_{T \in T_h} h(T)$, where $h(T)$ is the diameter of an element $T$. Additionally suppose that the family of triangulations are conforming and quasi-uniform i.e.\ there exists a constant $C$ such that
\[
\frac{h(T)}{\rho(T)} \leq C \quad \forall T \in T_h,
\]
where $\rho(T)$ is the radius of the largest ball contained in $T$, and there exists a constant $C$ such that
\[
\frac{h}{h(T)} \leq C \quad \forall T \in T_h
\]
(see e.g.\ Chapter 3 in \cite{Ciarlet1978}). We can define the following family of discrete spaces of piecewise linear globally continuous finite elements which vanish on the boundary:
\begin{align*}
V_h := \{v_h \in C_0(\Omega) : v_h|_T \in P_1(T) \text{ for all } T \in T_h \text{ and } v_h|_{\Omega \setminus \Omega_h} = 0 \}.
\end{align*}
Here $P_1(T)$ is the set of affine functions over $T$. We use this to define a discrete approximation to $S$. For $\eta \in L^2(\Omega)$ let $y_h$ be the unique function in satisfying 
\[
y_h \in V_h, \quad a(y_h,v_h) = (\eta,v_h) \quad \forall v_h \in V_h,
\]
and define $S_h : L^2(\Omega) \to V_h$ by $S_h\eta=y_h$. Observe that $V_h$ is a finite dimensional subspace of $H_0^1(\Omega)$, so it a Banach space when equipped with the $H_0^1(\Omega)$ norm. Therefore $S_h$ is a linear and continuous operator between Banach spaces and we are able to define an adjoint operator. 

\begin{remark}
\label{rem:cmvb}
We choose the range of $S_h$ to be $V_h \subset C(\bar{\Omega})$ rather than the range of $S$ so that $S_h \eta |_{\Gamma_\sigma}$ is well defined and belongs to $L^2(\Gamma_\sigma)$; Lemma~\ref{lem:trace} does not apply since we assume that $\Gamma_\sigma$ is Lipschitz rather than $C^2$.
\end{remark}


For $\eta \in L^2(\Omega)$ we have by (\ref{eqn:boundpoint}) and a Sobolev embedding result that $S \eta \in C(\bar{\Omega})$, so it makes sense to look at $\norm{S \eta-S_h \eta}_\infty$, where $\norm{\cdot}_\infty$ denotes the supremum norm.

\begin{lemma}
\label{lem:prelim}
For $\eta \in L^s(\Omega)$ and $2\leq s<\infty$,
\begin{equation}
\norm{S\eta-S_h \eta}_\infty \leq C(s)h^{2-\frac{n}{s}} \norm{\eta}_{L^s(\Omega)}, \quad n=2,3.
\end{equation}
\end{lemma}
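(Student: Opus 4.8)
The plan is to bound the finite element error in the maximum norm by combining the standard $L^\infty$ finite element error estimate for linear elements with the elliptic regularity provided by \eqref{eqn:boundpoint}. First I would invoke the classical result (see e.g.\ Schatz--Wahlbin, or Brenner--Scott) that for the piecewise linear finite element approximation $S_h \eta$ on a quasi-uniform family of triangulations of a convex smooth domain, one has
\[
\norm{S\eta - S_h\eta}_\infty \leq C h^{2-\frac{n}{2}} \, |\log h|^{\bar{s}} \norm{S\eta}_{W^{2,\infty}(\Omega)} \quad \text{or} \quad \norm{S\eta - S_h\eta}_\infty \leq C h^2 |\log h| \, \norm{S\eta}_{W^{2,\infty}(\Omega)},
\]
but since we only assume $S\eta \in W^{2,s}(\Omega)$ rather than $W^{2,\infty}$, the cleaner route is to use an $L^\infty$ estimate that measures the error against a best-approximation quantity and then interpolate. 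Concretely, I would use the bound $\norm{S\eta - S_h\eta}_\infty \le C \, \inf_{v_h \in V_h} \norm{S\eta - v_h}_\infty$ (up to a logarithmic factor in $n=2$, or directly in $n=3$), followed by a standard interpolation error estimate: if $w \in W^{2,s}(\Omega)$ with $s > n/2$, then $w$ embeds into $C(\bar\Omega)$ and its nodal interpolant $I_h w$ satisfies $\norm{w - I_h w}_\infty \le C h^{2 - n/s} \norm{w}_{W^{2,s}(\Omega)}$. Chaining these with \eqref{eqn:boundpoint}, namely $\norm{S\eta}_{W^{2,s}(\Omega)} \le C(s)\norm{\eta}_{L^s(\Omega)}$, yields the claimed estimate.

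The key steps in order are: (1) state the $L^\infty$ quasi-optimality of the Ritz projection on quasi-uniform meshes, citing the appropriate reference; (2) estimate $\inf_{v_h} \norm{S\eta - v_h}_\infty$ by the nodal interpolation error, which requires the Sobolev embedding $W^{2,s}(\Omega) \hookrightarrow C(\bar\Omega)$ valid since $2 - n/s > 0$ for $s \ge 2$ and $n \le 3$ — here note $2 - n/s \ge 2 - 3/2 = 1/2 > 0$, so the embedding and the pointwise interpolation estimate are legitimate; (3) apply the regularity estimate \eqref{eqn:boundpoint} to replace $\norm{S\eta}_{W^{2,s}(\Omega)}$ by $C(s)\norm{\eta}_{L^s(\Omega)}$; (4) deal with the mismatch between $\Omega$ and $\Omega_h$ — since $V_h$ functions vanish on $\Omega \setminus \Omega_h$ and $|\Omega \setminus \Omega_h| \le C h^2$, together with the quasi-uniformity and the boundary layer being $O(h^2)$ wide near a $C^2$ boundary, this contributes a term of order at most $h^2$ in the sup norm on the skin, which is dominated by $h^{2 - n/s}$; and (5) absorb any logarithmic factor appearing in the $n=2$ case into the generic constant, or note that it is harmless since one may replace the exponent by any slightly smaller one if a clean power is preferred — though here the statement keeps the exact exponent $2 - n/s$, so I would choose the logarithm-free version of the $L^\infty$ estimate (available for $n=3$ unconditionally, and for $n=2$ one picks up $|\log h|$ which is bounded by $C h^{-\eps'}$ for any $\eps' > 0$ but is in fact subsumed once $s < \infty$ is used carefully).

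The main obstacle I anticipate is handling the domain perturbation from $\Omega$ to $\Omega_h$ rigorously while keeping a clean sup-norm statement: the Ritz projection $S_h$ is defined using the bilinear form $a$ integrated over all of $\Omega$ but with trial/test functions supported in $\Omega_h$, so $S_h \eta$ is not literally the Ritz projection of $S\eta$ onto a conforming subspace of $H_0^1(\Omega_h)$ in the textbook sense, and one must verify that the standard $L^\infty$ estimates survive this. The cleanest way around this is to cite an existing result that already accounts for the polygonal/polyhedral approximation of a smooth convex domain (for instance in the spirit of the references \cite{Trudinger}, \cite{Evans} for the regularity, together with an $L^\infty$ FEM result on approximate domains), or to argue that on $\Omega_h$ the estimate holds by the classical theory and that the extra error on the thin skin $\Omega \setminus \Omega_h$ is controlled because $S\eta$ vanishes on $\partial\Omega$ and is Lipschitz (indeed $W^{1,\infty}$ by \eqref{eqn:boundpoint2} when $\eta \in H^1$, and more generally Hölder by embedding), so $|S\eta(x)| \le C h$ on the skin, again dominated by the main term. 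I would relegate these domain-fitting technicalities to a citation and focus the written proof on the interpolation-plus-regularity chain.
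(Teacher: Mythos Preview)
The paper does not prove this lemma at all; it simply cites Lemma~4.1 of \cite{BrettPoint}. Your outline is a reasonable reconstruction of the standard argument that sits behind such a citation: $L^\infty$ quasi-optimality of the Ritz projection on quasi-uniform meshes, the interpolation bound $\norm{w - I_h w}_\infty \le C h^{2-n/s}\norm{w}_{W^{2,s}(\Omega)}$ (valid since $2-n/s>0$ here), and the elliptic regularity \eqref{eqn:boundpoint}. Your handling of the skin $\Omega\setminus\Omega_h$ is also along the right lines, and indeed this is precisely the sort of technicality one expects the cited reference to absorb.

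The one point you flag but do not actually resolve is the logarithmic factor in the $L^\infty$ quasi-optimality constant for piecewise linears when $n=2$. Your remark that it is ``subsumed once $s<\infty$ is used carefully'' is not correct as stated: the estimate is to be expressed in terms of $\norm{\eta}_{L^s(\Omega)}$, so there is no spare regularity of $\eta$ to trade against $\abs{\log h}$. To keep the exact exponent $2-n/s$ one must either invoke a sharper $L^\infty$ finite element result that is already phrased against $W^{2,s}$ data without a logarithm (such statements exist in the Schatz--Wahlbin and Rannacher--Scott literature, and this is presumably what \cite{BrettPoint} does), or accept the factor $\abs{\log h}$ and observe that every downstream use in this paper---Corollary~\ref{cor:prelim} and Theorem~\ref{thm:mainline}---already carries an $\eps$-loss in the exponent for $n=2$, so a logarithm would be harmless there. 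Since the paper outsources the entire proof, this subtlety is delegated to the cited reference; your sketch is otherwise sound.
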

\begin{proof}
See Lemma~4.1 in \cite{BrettPoint}.
\end{proof}

\begin{corollary}
\label{cor:prelim}
For $\eta \in L^2(\Omega)$,
\begin{equation}
\label{eqn:errinf}
\norm{S\eta-S_h \eta}_\infty \leq Ch^{2-\frac{n}{2}} \norm{\eta}_{L^2(\Omega)}, \quad n=2,3.
\end{equation}
For $\eta \in H^1(\Omega)$,
\begin{equation}
\label{eqn:lineinfest}
\norm{S\eta-S_h \eta}_{L^\infty(\Omega_h)} \leq 
\begin{cases}C(\eps) h^{2-\eps}\norm{ \eta }_{H^1(\Omega)} & n=2, \\
C h^{\frac{3}{2}}\norm{ \eta }_{H^1(\Omega)} &n=3. \\
\end{cases}
\end{equation}
\end{corollary}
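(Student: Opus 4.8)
The plan is to read off both estimates from Lemma~\ref{lem:prelim} by choosing the integrability exponent $s$ appropriately, inserting a Sobolev embedding in the $H^1$ case to convert the $L^s(\Omega)$ norm on the right-hand side into an $H^1(\Omega)$ norm, and finally noting that restricting the supremum norm from $\Omega$ to $\Omega_h$ only weakens the bound.

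First, for (\ref{eqn:errinf}) I would simply take $s=2$ in Lemma~\ref{lem:prelim}. This immediately gives $\norm{S\eta-S_h\eta}_\infty \leq C(2)\, h^{2-\frac{n}{2}}\norm{\eta}_{L^2(\Omega)}$ for $n=2,3$, which is the claim.

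For (\ref{eqn:lineinfest}) I would use the Sobolev embedding $H^1(\Omega) \hookrightarrow L^s(\Omega)$, which is available because $\Omega$ is convex with a $C^2$ boundary: it holds for every finite $s$ when $n=2$ and for $2\leq s\leq 6$ when $n=3$, with $\norm{\eta}_{L^s(\Omega)}\leq C(s)\norm{\eta}_{H^1(\Omega)}$. When $n=3$ I would take $s=6$, so that $2-\frac{n}{s}=\frac{3}{2}$, and combine the embedding with Lemma~\ref{lem:prelim} to get $\norm{S\eta-S_h\eta}_\infty \leq C\,h^{\frac{3}{2}}\norm{\eta}_{H^1(\Omega)}$. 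When $n=2$, given $\eps>0$ (and without loss of generality $\eps\leq 1$, since for larger $\eps$ the claimed bound only weakens once $h$ is bounded) I would take $s=\tfrac{2}{\eps}\geq 2$, so that $2-\frac{n}{s}=2-\eps$, and again combine with Lemma~\ref{lem:prelim} to obtain $\norm{S\eta-S_h\eta}_\infty \leq C(\eps)\,h^{2-\eps}\norm{\eta}_{H^1(\Omega)}$. In both cases, since $\Omega_h\subset\Omega$ we have $\norm{\cdot}_{L^\infty(\Omega_h)}\leq\norm{\cdot}_\infty$, and (\ref{eqn:lineinfest}) follows.

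There is no genuine obstacle here; the only points requiring a little care are the growth of the Sobolev embedding constant $C(s)$ with $s$ in the case $n=2$, which is precisely what forces the $\eps$-dependent constant $C(\eps)$ and excludes the endpoint $\eps=0$, together with the harmless reduction to $\eps\leq 1$ (and, implicitly, $h$ bounded). Everything else is a direct substitution into Lemma~\ref{lem:prelim}.
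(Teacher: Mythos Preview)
Your proposal is correct and follows essentially the same approach as the paper: take $s=2$ for (\ref{eqn:errinf}), and for (\ref{eqn:lineinfest}) combine Lemma~\ref{lem:prelim} with the Sobolev embedding $H^1(\Omega)\hookrightarrow L^s(\Omega)$, choosing $s=6$ when $n=3$ and $s$ large (you make this explicit as $s=2/\eps$) when $n=2$. Your added remarks about restricting to $\Omega_h$ and reducing to $\eps\le 1$ are minor elaborations that the paper leaves implicit.
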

\begin{proof}
The first estimate follows by taking $s=2$ in Lemma~\ref{lem:prelim}. The other estimate follow by combining the lemma with Sobolev embedding results.

If $\eta \in H^1(\Omega)$ then
\[
H^1(\Omega) \hookrightarrow L^s(\Omega) \quad \forall s \in 
\begin{cases} [1,\infty) & n=2, \\
[1,6] & n=3.
\end{cases}
\]
So taking $s$ sufficiently large for $n=2$ and $s=6$ for $n=3$ we get that for any $\eps>0$,
\[
C(s) h^{2-\frac{n}{s}} \norm{ \eta }_{L^s(\Omega)} \leq 
\begin{cases}C(\eps) h^{2-\eps}\norm{ \eta }_{H^1(\Omega)} & n=2, \\
C h^{\frac{3}{2}}\norm{ \eta }_{H^1(\Omega)} &n=3. \\
\end{cases}
\]
Combining this with Lemma~\ref{lem:prelim} gives the result.
\end{proof}

We are now ready to introduce the discrete problem. Define the discrete reduced objective functional $\hat{J}_h: L^2(\Omega) \to \R$ by
\[
\hat{J}_h(\eta) := \frac{1}{2} \norm{ S_h \eta - g_{\Gamma,\sigma} }^2_{\Gamma_\sigma} +\frac{\nu}{2} \norm{\eta}^2_{L^2(\Omega)}
\]
and consider the following discrete problem based on the variational discretisation concept from \cite{Hinze2005}:
\begin{equation}
\label{eqn:controlprob2} 
\min \hat{J}_h(\eta) \text{ over } \eta \in U_{ad}.
\end{equation}
Here $g_{\Gamma,\sigma} \in L^2(\Gamma_\sigma)$ is a function that will be defined to approximate $g_\Gamma \in L^2(\Gamma)$. Also let the norm $\norm{\cdot}_{\Gamma_\sigma} := \sqrt{ m_\sigma(\cdot,\cdot)}$, where $m_\sigma:L^2(\Gamma_\sigma) \times L^2(\Gamma_\sigma) \to \R$ is some inner product that will be defined to approximate the $L^2(\Gamma)$ inner product. Note that the restriction of $S_h \eta$ to $L^2(\Gamma_\sigma)$ is well defined by Remark~\ref{rem:cmvb}. The assumptions we have made so far on $\Gamma_\sigma$, $g_{\Gamma,\sigma}$ and $m_\sigma$ are sufficient to prove existence of a solution to (\ref{eqn:controlprob2}) and derive optimality conditions. These solutions will not necessarily closely approximate the solution of the continuous problem (\ref{eqn:controlprobline}), but we will impose further assumptions in the next section which ensure this.


\begin{theorem} Problem (\ref{eqn:controlprob2}) has a unique solution ${u}_h \in U_{ad}$. Moreover, ${u}_h \in L^2(\Omega)$ is a solution of (\ref{eqn:controlprob2}) if and only if there exists ${p}_h \in V_h$ such that
\begin{subequations}
\begin{align}
&u_h \in U_{ad}, \quad (p_h + \nu u_h,v-u_h) \geq 0  && \forall v \in U_{ad} \label{eqn:system2a} \\
&a(v_h,p_h) = m_\sigma (S_h u_h|_{\Gamma_\sigma}-g_{\Gamma,\sigma}, v_h|_{\Gamma_\sigma})  && \forall v_h \in V_h. \label{eqn:system2b} 
\end{align}
\end{subequations}
\end{theorem}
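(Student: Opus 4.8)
The plan is to follow the standard template for proving well-posedness and first-order optimality conditions for a discretised linear-quadratic optimal control problem, exactly as was done for the continuous problem in Theorems~\ref{thm:existline} and \ref{thm:opt}, adapting each ingredient to the discrete setting. First I would establish existence and uniqueness: since $S_h:L^2(\Omega)\to V_h\subset H_0^1(\Omega)$ is linear and continuous and the trace/restriction map $V_h\to L^2(\Gamma_\sigma)$ is well defined (Remark~\ref{rem:cmvb}), the map $\eta\mapsto S_h\eta|_{\Gamma_\sigma}$ is linear and continuous, so $\hat J_h$ is a continuous, convex, coercive quadratic functional on $L^2(\Omega)$ (coercivity coming from the $\frac{\nu}{2}\norm{\eta}_{L^2(\Omega)}^2$ term with $\nu>0$), and it is in fact strictly convex because of that same term. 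Since $U_{ad}$ is nonempty, closed, convex and bounded (or the strict convexity plus coercivity handles the unbounded case $b=-a=\infty$), the direct method in the calculus of variations gives a minimiser, and strict convexity gives uniqueness. I would cite Theorem~2.14 in \cite{Troltzsch2005} here just as was done before.

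Next I would derive the optimality system. Because $\hat J_h$ is Gâteaux differentiable and convex on $L^2(\Omega)$, $u_h$ solves (\ref{eqn:controlprob2}) if and only if the variational inequality $\langle \hat J_h'(u_h), v-u_h\rangle\ge 0$ holds for all $v\in U_{ad}$. Computing the derivative, using that $S_h$ is linear, gives
\[
m_\sigma(S_h u_h|_{\Gamma_\sigma} - g_{\Gamma,\sigma},\, S_h(v-u_h)|_{\Gamma_\sigma}) + \nu(u_h, v-u_h)_{L^2(\Omega)} \ge 0 \quad \forall v\in U_{ad}.
\]
To rewrite the first term I need a discrete adjoint. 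I would define the functional $v_h\mapsto m_\sigma(S_h u_h|_{\Gamma_\sigma}-g_{\Gamma,\sigma}, v_h|_{\Gamma_\sigma})$ on $V_h$; since $V_h$ is finite dimensional this is automatically continuous, so by Lax–Milgram (or Riesz, using coercivity of $a(\cdot,\cdot)$ on $V_h$, which follows from ellipticity and Poincaré as in (\ref{eqn:weakline})) there is a unique $p_h\in V_h$ with $a(v_h,p_h)=m_\sigma(S_h u_h|_{\Gamma_\sigma}-g_{\Gamma,\sigma}, v_h|_{\Gamma_\sigma})$ for all $v_h\in V_h$, which is exactly (\ref{eqn:system2b}). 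Then testing this identity with $v_h=S_h(v-u_h)\in V_h$ and using the definition $a(S_h\eta,w_h)=(\eta,w_h)_{L^2(\Omega)}$ for $w_h\in V_h$ — note this requires the symmetry of $a$, or rather care with the order of arguments; since $a$ is symmetric here (the $a_{ij}$ are symmetric) this is clean — gives $m_\sigma(S_h u_h|_{\Gamma_\sigma}-g_{\Gamma,\sigma}, S_h(v-u_h)|_{\Gamma_\sigma}) = (p_h, v-u_h)_{L^2(\Omega)}$, so the variational inequality becomes $(p_h+\nu u_h, v-u_h)_{L^2(\Omega)}\ge 0$ for all $v\in U_{ad}$, which is (\ref{eqn:system2a}). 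The converse direction follows because the variational inequality is sufficient for optimality by convexity.

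The main obstacle, such as it is, is bookkeeping rather than conceptual: one must be slightly careful that $a(\cdot,\cdot)$ is being used in the right argument order when relating $S_h$ and $p_h$ (handled by symmetry of $a$), and that the restriction operator to $\Gamma_\sigma$ is only being applied to $V_h$-functions where it is legitimate (Remark~\ref{rem:cmvb}), not to general $H_0^1(\Omega)$ functions. Apart from that, every step mirrors the continuous case, so I would keep the write-up short, emphasising only the finite-dimensionality point that makes the discrete adjoint trivial to construct, and otherwise refer to the proofs of Theorems~\ref{thm:existline} and \ref{thm:opt}.
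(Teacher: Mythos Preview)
Your proposal is correct and matches the paper's approach exactly: the paper's proof is the single sentence ``The proof follows by the same arguments as in Theorem~\ref{thm:opt},'' and what you have written is precisely a careful spelling-out of those arguments in the discrete setting. Your observations about the role of Remark~\ref{rem:cmvb} and the symmetry of $a(\cdot,\cdot)$ are accurate and appropriate bookkeeping.
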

\begin{proof}
The proof follows by the same arguments as in Theorem~\ref{thm:opt}.
\end{proof}

We are minimising over the infinite dimensional space $U_{ad}$, but (\ref{eqn:system2a}) implies
\[
u_h = \mathbb{P}_{[a,b]} \Big (-\frac{1}{\nu}p_h \Big ) \quad \text{a.e.\ in } \Omega.
\]
So the control is implicitly discretised through $S_h$. This means that the above optimality conditions can be solved computationally for appropriate choices of $\Gamma_\sigma$, $m_\sigma$ and $g_{\Gamma,\sigma}$. 

\section{Numerical analysis}
\label{sec:numanal}

In this section we will prove an a priori $L^2(\Omega)$ error estimate for convergence of the discrete optimal control problem (\ref{eqn:controlprob2}) to the continuous optimal control problem (\ref{eqn:controlprobline}). This will require additional assumptions on $\Gamma_\sigma$, $m_\sigma$ and $g_{\Gamma,\sigma}$. In order to write down these assumptions we need a way to compare functions defined on $\Gamma_\sigma$ with functions defined on $\Gamma$. For this purpose we introduce the lift operator (see e.g.\ Section~4.1 in \cite{Dziuk2013} for more details). 

Let $w_\sigma$ be a function defined on $\Gamma_\sigma$. Due to the covering condition, for each $c \in \Gamma$ there is a unique $x \in \Gamma_\sigma$ with $c=c(x)$ (see (\ref{eqn:opop}). We will denote this $x$ by $x(c)$. Then the lift operator $(\cdot)^l$ mapping a function defined on $\Gamma_\sigma$ to a function defined on $\Gamma$ is given by
\[
w_\sigma^l (c) := w_\sigma(x(c)) \quad \forall c \in \Gamma.
\]
Note that the inverse lift operator $(\cdot)^{-l} := ((\cdot)^l)^{-1}$ is well defined. We also use $x(c)$ to define the distance $D_\sigma : \Gamma \mapsto \R$ between $\Gamma$ and $\Gamma_\sigma$ by
\[
D_\sigma(c) := \abs{c-x(c)}.
\]

We can now impose the following additional assumptions on $\Gamma_\sigma$, $m_\sigma$ and $g_{\Gamma,\sigma}$ (see Section~\ref{sec:disc} for the previous assumptions). 

\begin{assumption}
\label{ass:ass}
$\Gamma_\sigma$, $m_\sigma$ and $g_{\Gamma,\sigma}$ satisfy
\begin{align}
\sup_{c \in \Gamma} D_\sigma(c) &\leq C \sigma^2, \label{eqn:dcond} \\
\abs{ (w_\sigma^l,z_\sigma^l)_{L^2(\Gamma)} - m_\sigma(w_\sigma,z_\sigma) } &\leq C \sigma^2 \norm{w_\sigma^l}_{L^2(\Gamma)} \norm{z_\sigma^l}_{L^2(\Gamma)} \quad \forall w_\sigma,z_\sigma \in L^2(\Gamma_\sigma) \label{eqn:mcond}, \\ 
\norm{g_\Gamma - g_{\Gamma,\sigma}^l}_{L^2(\Gamma)} &\leq C \sigma^2 \label{eqn:ygcond}
\end{align}
with $C$ independent of $\sigma$.
\end{assumption}

Under these assumptions we can prove some lemmas which will enable us to prove a priori $L^2(\Omega)$ error estimates for the control.

\begin{lemma}
\label{lem:h01bound}
Let $u_h$ be the solution of (\ref{eqn:controlprob2}). For sufficiently small $\sigma$ and $h$,
\begin{equation*}
\norm { u_h }_{H^1(\Omega)} + \norm { p_h }_{H^1(\Omega)} \leq Ch^{-\frac{n}{2}} \sigma^2   
\end{equation*}
with $C$ independent of $\sigma$ and $h$.
\end{lemma}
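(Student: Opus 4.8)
The plan is to exploit the optimality conditions (\ref{eqn:system2a})--(\ref{eqn:system2b}) together with the projection formula $u_h = \mathbb{P}_{[a,b]}(-\tfrac1\nu p_h)$, and to bootstrap from an $L^2(\Omega)$ bound on $u_h$ up to an $H^1(\Omega)$ bound on $p_h$ and then back to $u_h$. First I would note that since $0 \in U_{ad}$ (or, in the unbounded case, $u \in U_{ad}$ and one can subtract it) one has $\hat J_h(u_h) \le \hat J_h(0)$, which after expanding gives $\tfrac{\nu}{2}\norm{u_h}_{L^2(\Omega)}^2 \le \tfrac12 \norm{g_{\Gamma,\sigma}}_{\Gamma_\sigma}^2$; combined with (\ref{eqn:mcond}) and (\ref{eqn:ygcond}) this yields a uniform bound $\norm{u_h}_{L^2(\Omega)} \le C$. (If $a,b$ are finite this is even more immediate since $U_{ad} \subset L^\infty$.)

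Next I would estimate $p_h$ in $H^1(\Omega)$. Testing (\ref{eqn:system2b}) with $v_h = p_h$ and using coercivity of $a(\cdot,\cdot)$ on $H_0^1(\Omega)$ gives
\[
\alpha \norm{p_h}_{H_0^1(\Omega)}^2 \le a(p_h,p_h) = m_\sigma(S_h u_h|_{\Gamma_\sigma} - g_{\Gamma,\sigma},\, p_h|_{\Gamma_\sigma}).
\]
Using (\ref{eqn:mcond}) to pass to the $L^2(\Gamma)$ inner product of lifts (at the cost of a factor $(1+C\sigma^2)$), then Cauchy--Schwarz on $\Gamma$, I reduce to bounding $\norm{(S_h u_h|_{\Gamma_\sigma})^l}_{L^2(\Gamma)}$, $\norm{g_{\Gamma,\sigma}^l}_{L^2(\Gamma)}$ and $\norm{(p_h|_{\Gamma_\sigma})^l}_{L^2(\Gamma)}$. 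The last two are controlled: $g_{\Gamma,\sigma}^l$ by (\ref{eqn:ygcond}) and the triangle inequality, and the lifted trace of $p_h$ on $\Gamma$ needs a trace-type estimate. This is where the factor $h^{-n/2}$ will enter: $p_h \in V_h$ is piecewise linear, and one bounds $\norm{(p_h|_{\Gamma_\sigma})^l}_{L^2(\Gamma)} \le C\norm{p_h}_{L^\infty(\Omega)} \le C h^{-n/2}\norm{p_h}_{L^2(\Omega)} \le C h^{-n/2}\norm{p_h}_{H_0^1(\Omega)}$, using an inverse estimate for the quasi-uniform family $T_h$ (that the lift does not distort $L^2$ norms by more than a constant follows from the standard surface-approximation estimates behind Assumption~\ref{ass:ass}, e.g.\ Section~4.1 in \cite{Dziuk2013}). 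Similarly $\norm{(S_h u_h|_{\Gamma_\sigma})^l}_{L^2(\Gamma)} \le C h^{-n/2}\norm{S_h u_h}_{L^2(\Omega)} \le C h^{-n/2}\norm{u_h}_{L^2(\Omega)} \le C h^{-n/2}$ by stability of $S_h$ and the $L^2$ bound from the first step. Substituting everything back, one factor of $\norm{p_h}_{H_0^1(\Omega)}$ cancels, leaving $\norm{p_h}_{H_0^1(\Omega)} \le C h^{-n/2}(1 + \norm{g_{\Gamma,\sigma}^l}_{L^2(\Gamma)})$.

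Finally, to close I would feed this back through $u_h = \mathbb{P}_{[a,b]}(-\tfrac1\nu p_h)$: since $\mathbb{P}_{[a,b]}$ is Lipschitz with constant $1$ and maps $H_0^1(\Omega)$ to $H_0^1(\Omega)$ (Corollary~\ref{cor:reg2}), $\norm{\nabla u_h}_{L^2(\Omega)} \le \tfrac1\nu \norm{\nabla p_h}_{L^2(\Omega)}$, so $\norm{u_h}_{H^1(\Omega)} \le C\norm{p_h}_{H^1(\Omega)} + C\norm{u_h}_{L^2(\Omega)}$, and both terms are already controlled. It remains only to absorb the $g_{\Gamma,\sigma}^l$ contribution into the stated bound; writing $\norm{g_{\Gamma,\sigma}^l}_{L^2(\Gamma)} \le \norm{g_\Gamma}_{L^2(\Gamma)} + C\sigma^2 \le C$, the right-hand side becomes $C h^{-n/2}$, and one checks the claimed form $C h^{-n/2}\sigma^2$ — which suggests the intended bound is really on $\norm{u_h - u}_{H^1} + \norm{p_h - p}_{H^1}$ or uses that the $\sigma$-dependence tracks $\norm{S_h u_h - g_{\Gamma,\sigma}}_{\Gamma_\sigma}$ through a sharper argument comparing with the continuous adjoint $p$; I would handle this by subtracting the continuous optimality system and estimating the \emph{differences}, so that the right-hand side genuinely carries a $\sigma^2$ (from (\ref{eqn:dcond})--(\ref{eqn:ygcond})) rather than an $O(1)$ term. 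The main obstacle is precisely this last point: getting the $\sigma^2$ rather than just boundedness requires carefully pairing the discrete residual against the geometric error terms in Assumption~\ref{ass:ass}, and making sure the inverse-estimate factor $h^{-n/2}$ multiplies only the $\sigma^2$-small quantities and not an $O(1)$ leftover; managing the trace of $V_h$ functions onto the lifted surface $\Gamma$ cleanly (without picking up extra powers of $h$) is the technical heart of the argument.
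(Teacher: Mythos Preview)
Your opening steps are right: the $L^2$ bound on $u_h$ from $\hat J_h(u_h)\le \hat J_h(0)$, the reduction $\|u_h\|_{H^1}\le C\|p_h\|_{H^1}$ via the projection formula, and testing (\ref{eqn:system2b}) with $p_h$ are exactly what the paper does. The gap is in how you estimate the trace factor $\|(p_h|_{\Gamma_\sigma})^l\|_{L^2(\Gamma)}$. Going through $\|p_h\|_{L^\infty}$ and an inverse estimate is legal but throws away the $\sigma^2$: you end up with a bare $h^{-n/2}$ multiplying an $O(1)$ quantity, and no amount of subtracting the continuous optimality system will recover the missing geometric factor.

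The device the paper uses is to split on $\Gamma$:
\[
\|p_h^l\|_{L^2(\Gamma)} \le \|p_h\|_{L^2(\Gamma)} + \|p_h^l - p_h\|_{L^2(\Gamma)}.
\]
The first term is the trace of $p_h\in V_h\subset H_0^1(\Omega)$ on the \emph{fixed} smooth surface $\Gamma$, so Lemma~\ref{lem:trace} gives $\|p_h\|_{L^2(\Gamma)}\le C\|p_h\|_{H_0^1(\Omega)}$ with no $h$-dependence. For the second term one uses the pointwise Lipschitz bound $|v(c)-v(x(c))|\le \|\nabla v\|_{L^\infty}|c-x(c)|$, then assumption (\ref{eqn:dcond}), and only \emph{then} the inverse inequality, yielding
\[
\|p_h^l - p_h\|_{L^2(\Gamma)} \le C\sigma^2\|p_h\|_{W^{1,\infty}(\Omega)} \le C h^{-n/2}\sigma^2\|p_h\|_{H_0^1(\Omega)}.
\]
This way the $h^{-n/2}$ is coupled to $\sigma^2$ exactly. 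Dividing through one gets $\|p_h\|_{H_0^1(\Omega)}\le C(1+h^{-n/2}\sigma^2)$. (For the other factor $\|S_h u_h\|_{L^2(\Gamma_\sigma)}$ you also do not need an inverse estimate: $\|S_h u_h\|_\infty\le \|Su_h\|_\infty + Ch^{2-n/2}\|u_h\|_{L^2}\le C\|u_h\|_{L^2}$ via (\ref{eqn:errinf}) and the embedding $H^2(\Omega)\hookrightarrow C(\bar\Omega)$.)

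One more point: the bound as printed, $Ch^{-n/2}\sigma^2$, cannot be literally correct (take $\Gamma_\sigma=\Gamma$, so $\sigma$ is effectively $0$, yet $p_h\not\equiv 0$). The paper's own proof in fact concludes with $\|p_h\|_{H_0^1(\Omega)}\le C(1+h^{-n/2}\sigma^2)$, and this is all that is used downstream in Theorem~\ref{thm:mainline}, where the hypothesis $\sigma\le Ch^{n/4}$ makes the right-hand side uniformly bounded. So your instinct that something is off with the stated form is correct, but the fix is a missing ``$1+$'', not a comparison with the continuous adjoint.
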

\begin{proof}
Fix $\sigma_0$, $h_0 \in \R$ and suppose throughout this proof that $0<\sigma<\sigma_0$ and $0<h<h_0$.

First observe that $\norm { u_h }_{H_0^1(\Omega)} \leq C \norm { p_h }_{H_0^1(\Omega)}$, since (\ref{eqn:system2a}) gives $u_h=\mathbb{P}_{[a,b]} ( -\frac{1}{\nu}p_h )$. Also, for $v \in H_0^1(\Omega)$ we have by the Poincar\'{e} inequality that $\norm{v}_{H^1(\Omega)} \leq C\norm{v}_{H_0^1(\Omega)}.$ So we just need to show that $\norm { p_h }_{H_0^1(\Omega)} \leq C$. 

Testing (\ref{eqn:system2b}) with $v_h=p_h$ and using the coercivity of $a(\cdot,\cdot)$ and the boundedness of $m_\sigma(\cdot,\cdot)$ we get that
\begin{align}
\alpha \norm { p_h }^2_{H_0^1(\Omega)} &\leq C \norm{S_h u_h - g_{\Gamma, \sigma}}_{L^2(\Gamma_\sigma)} \norm{p_h}_{L^2(\Gamma_\sigma)} \nonumber\\
&\leq C (\norm{S_h u_h}_{L^2(\Gamma_\sigma)} + \norm{g_{\Gamma,\sigma}^l}_{L^2(\Gamma)}) \norm{p_h^l}_{L^2(\Gamma)} \nonumber \\
&\leq C (\norm{S_h u_h}_{L^\infty(\Omega)} + 1)( \norm{p_h}_{L^2(\Gamma)} + \norm{p_h^l-p_h}_{L^2(\Gamma)} ) \nonumber \\
&\leq C (\norm{u_h}_{L^2(\Omega)} + 1)( \norm{p_h}_{H_0^1(\Omega)} + \norm{p_h^l-p_h}_{L^2(\Gamma)} ) \nonumber \\
& \leq C ( \norm{p_h}_{H_0^1(\Omega)} + \norm{p_h^l-p_h}_{L^2(\Gamma)} ). \label{eqn:zkzk}
\end{align}
For the third inequality we have used assumption (\ref{eqn:ygcond}). For the fourth inequality have used the supremum norm error estimate (\ref{eqn:errinf}) and the trace inequality from Lemma~\ref{lem:trace}. For the last inequality we have used that
\[
\frac{\nu}{2}\norm{ u_h }^2_{L^2(\Omega)} \leq \hat{J}_h(u_h) = \hat{J}_h(0) = \norm{g_{\Gamma,\sigma}}^2_{\Gamma_\sigma} \leq \norm{g_{\Gamma,\sigma}^l}^2_{L^2(\Gamma)} \leq C,
\]
which implies $\norm{ u_h }^2_{L^2(\Omega)} \leq C$.

Note that for $v \in W^{1,\infty}(\Omega)$ and $x_1,x_2 \in \Omega$,
\[
\abs{v(x_1)-v(x_2)} \leq \norm{\nabla v}_{L^\infty(\Omega)} \abs{x_1-x_2},
\] 
(see Theorem~2.1.4 in \cite{Ziemer}) so
\begin{align}
\norm{v^l-v}_{L^2(\Gamma)} &= \left( \int_\Gamma \big (v(c)-v(x(c)) \big)^2 \rd c \right )^{\frac{1}{2}} \nonumber \\
&\leq \norm{\nabla v}_{L^\infty(\Omega)} \sup_{c \in \Gamma} \abs{c-x(c)} \nonumber \\
&\leq \norm{v}_{W^{1,\infty}(\Omega)} \sup_{c \in \Gamma} D_\sigma(c). \label{eqn:jjscov}
\end{align}
Using this with $v=p_h$, an inverse inequality, and assumption (\ref{eqn:dcond}) we get
\begin{align*}
\norm{p_h^l-p_h}_{L^2(\Gamma)} \leq C h^{-\frac{n}{2}} \sigma^2 \norm{v_h}_{H_0^1(\Omega)}.
\end{align*}
Combining this with (\ref{eqn:zkzk}) gives
\[
\alpha \norm { p_h }^2_{H_0^1(\Omega)} \leq C \norm{p_h}_{H_0^1(\Omega)}( 1 +  h^{-\frac{n}{2}} \sigma^2),
\]
so the result follows.
\end{proof}

\begin{lemma} \label{lem:ndiff}
For some $\eta \in H_0^1(\Omega)$ set $w:=S\eta|_\Gamma-g_\Gamma$ and $w_\sigma:=S_h \eta|_{\Gamma_\sigma} - g_{\Gamma,\sigma}$. Then for sufficiently small $\sigma$ and $h$,
\begin{equation*}
\abs{ \norm{w}^2_{L^2(\Gamma)}-\norm{w_\sigma}^2_{\Gamma_\sigma} } \leq C( \norm{\eta}_{H^1(\Omega)} ) \left ( \norm{ S \eta - S_h \eta}_\infty  + \sigma^2 \right )
\end{equation*}
with $C$ independent of $\sigma$ and $h$. 
\end{lemma}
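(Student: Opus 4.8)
The plan is to estimate the difference $\norm{w}^2_{L^2(\Gamma)} - \norm{w_\sigma}^2_{\Gamma_\sigma}$ by inserting the lift of $w_\sigma$ as an intermediate quantity. Writing
\[
\norm{w}^2_{L^2(\Gamma)} - \norm{w_\sigma}^2_{\Gamma_\sigma}
= \left( \norm{w}^2_{L^2(\Gamma)} - \norm{w_\sigma^l}^2_{L^2(\Gamma)} \right)
  + \left( \norm{w_\sigma^l}^2_{L^2(\Gamma)} - \norm{w_\sigma}^2_{\Gamma_\sigma} \right),
\]
the second bracket is controlled directly by Assumption~\ref{ass:ass}, specifically \eqref{eqn:mcond} applied with $w_\sigma = z_\sigma$, giving a bound of $C\sigma^2 \norm{w_\sigma^l}^2_{L^2(\Gamma)}$; it then remains to check that $\norm{w_\sigma^l}_{L^2(\Gamma)}$ is bounded by $C(\norm{\eta}_{H^1(\Omega)})$, which follows since $\norm{w_\sigma^l}_{L^2(\Gamma)} \le \norm{S_h\eta}_{L^\infty(\Omega_h)} |\Gamma|^{1/2} + \norm{g_{\Gamma,\sigma}^l}_{L^2(\Gamma)}$ and the first term is bounded using \eqref{eqn:errinf} (or the triangle inequality $\norm{S_h\eta}_\infty \le \norm{S\eta}_\infty + \norm{S\eta - S_h\eta}_\infty$ together with the Sobolev embedding $H^1 \hookrightarrow C(\bar\Omega)$ is false in general, so one should instead just use that $S_h\eta$ is bounded in $L^\infty$ by $\norm{\eta}_{H^1}$ via \eqref{eqn:errinf} and $\norm{S\eta}_\infty \le C\norm{\eta}_{H^1}$ — or even more simply, bound everything in terms of $\norm{\eta}_{H^1(\Omega)}$ as the right-hand side allows a constant depending on it), and the second term by \eqref{eqn:ygcond} plus $\norm{g_\Gamma}_{L^2(\Gamma)}$.

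For the first bracket, I would use the algebraic identity $a^2 - b^2 = (a-b)(a+b)$ at the level of norms, i.e.
\[
\left| \norm{w}^2_{L^2(\Gamma)} - \norm{w_\sigma^l}^2_{L^2(\Gamma)} \right|
\le \norm{w - w_\sigma^l}_{L^2(\Gamma)} \left( \norm{w}_{L^2(\Gamma)} + \norm{w_\sigma^l}_{L^2(\Gamma)} \right),
\]
so the task reduces to estimating $\norm{w - w_\sigma^l}_{L^2(\Gamma)}$. Now
\[
w - w_\sigma^l = \left( S\eta|_\Gamma - (S_h\eta|_{\Gamma_\sigma})^l \right) - \left( g_\Gamma - g_{\Gamma,\sigma}^l \right),
\]
and the $g$-difference is $\le C\sigma^2$ by \eqref{eqn:ygcond}. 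For the state part, insert $(S\eta|_{\Gamma_\sigma})^l = (S\eta)^l|_\Gamma$ — more precisely, compare $S\eta$ evaluated at $c \in \Gamma$ with $S_h\eta$ evaluated at $x(c) \in \Gamma_\sigma$ — via
\[
\big| S\eta(c) - S_h\eta(x(c)) \big| \le \big| S\eta(c) - S\eta(x(c)) \big| + \big| S\eta(x(c)) - S_h\eta(x(c)) \big|.
\]
The second term is bounded pointwise by $\norm{S\eta - S_h\eta}_\infty$ (valid since both are continuous on $\bar\Omega$ and $x(c) \in U_\delta \cap \Omega$, noting $S_h\eta$ is defined on all of $\Omega$ with $S_h\eta|_{\Omega\setminus\Omega_h}=0$, so one should be slightly careful near $\partial\Omega$ — but $\Gamma_\sigma \subset U_\delta \cap \Omega$ and for $h$ small enough $\Gamma_\sigma \subset \Omega_h$, so this is fine). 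The first term is a "geometric" error: by the Lipschitz (indeed $W^{1,\infty}$) bound on $S\eta$, it is $\le \norm{S\eta}_{W^{1,\infty}(\Omega)} \abs{c - x(c)} = \norm{S\eta}_{W^{1,\infty}(\Omega)} D_\sigma(c) \le C\norm{\eta}_{H^1(\Omega)}\sigma^2$ using \eqref{eqn:boundpoint2} and \eqref{eqn:dcond} — this is exactly the estimate \eqref{eqn:jjscov} already used in Lemma~\ref{lem:h01bound}. Integrating the square over $\Gamma$ (which has finite measure) and taking square roots gives $\norm{w - w_\sigma^l}_{L^2(\Gamma)} \le C(\norm{\eta}_{H^1(\Omega)})(\norm{S\eta - S_h\eta}_\infty + \sigma^2)$.

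Combining the two brackets, using that $\norm{w}_{L^2(\Gamma)}$ and $\norm{w_\sigma^l}_{L^2(\Gamma)}$ are both bounded by $C(\norm{\eta}_{H^1(\Omega)})$, and absorbing the $\sigma^2 \norm{w_\sigma^l}^2$ term into $C(\norm{\eta}_{H^1(\Omega)})\sigma^2$, yields the claimed estimate. The only mildly delicate point — and the one I would flag as the main obstacle — is making sure all the $L^\infty$ and trace estimates are legitimately applied on the correct domains: $S_h\eta$ lives on $\Omega$ (extended by zero outside $\Omega_h$), $\Gamma_\sigma \subset U_\delta \cap \Omega$, and for sufficiently small $h$ we have $\Gamma_\sigma \subset \Omega_h$ so that the supremum-norm bound \eqref{eqn:errinf}/\eqref{eqn:lineinfest} and the pointwise Lipschitz estimate genuinely apply at the points $x(c)$; the rest is bookkeeping with the triangle inequality and Assumption~\ref{ass:ass}.
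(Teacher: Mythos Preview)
Your proposal is correct and follows essentially the same approach as the paper: the same splitting via the lift $w_\sigma^l$, the same $a^2-b^2$ factorisation for the first term, the same three-way decomposition of $w-w_\sigma^l$ (geometric error via \eqref{eqn:jjscov}/\eqref{eqn:boundpoint2}, discretisation error via $\norm{S\eta-S_h\eta}_\infty$, and data error via \eqref{eqn:ygcond}), and the same use of \eqref{eqn:mcond} for the second term. The parenthetical worries you flag (e.g.\ $\Gamma_\sigma\subset\Omega_h$ for small $h$, bounding $\norm{S_h\eta}_\infty$ via \eqref{eqn:errinf} plus $\norm{S\eta}_\infty$) are handled in the paper in the same way, so there is no gap.
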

\begin{proof}
Fix $\sigma_0$, $h_0 \in \R$ and suppose throughout this proof that $0<\sigma<\sigma_0$ and $0<h<h_0$.

Make the splitting
\[
\abs{ \norm{w}^2_{L^2(\Gamma)}-\norm{w_\sigma}_{\Gamma_\sigma}^2 }  \leq \abs{ \norm{w}^2_{L^2(\Gamma)}- \norm{w_\sigma^l}^2_{L^2(\Gamma)} } + \abs{ \norm{w_\sigma^l}^2_{L^2(\Gamma)} - \norm{w_\sigma}^2_{\Gamma_\sigma} }. 
\]
To bound the first term on the right hand side note that
\begin{align}
\abs{ \norm{w}^2_{L^2(\Gamma)} - \norm{w_\sigma^l}^2_{L^2(\Gamma)} } =& \abs{ (w+w_\sigma^l,w-w_\sigma^l) } \nonumber \\
\leq&  \norm{w+w_\sigma^l}_{L^2(\Gamma)} \norm{w-w_\sigma^l}_{L^2(\Gamma)} \nonumber \\
\leq& \left ( \norm{w}_{L^2(\Gamma)} + \norm{w_\sigma^l}_{L^2(\Gamma)} \right ) \norm{ w - w_\sigma^l }_{L^2(\Gamma)}. \label{eqn:lkjlkj}
\end{align}
Using the trace result from Lemma~\ref{lem:trace} and the continuity of $S$ we have
\begin{align*}
\norm{w}_{L^2(\Gamma)} &= \norm{S \eta - g_\Gamma}_{L^2(\Gamma)} \\
&\leq \norm{S \eta}_{L^2(\Gamma)} + \norm{g_\Gamma}_{L^2(\Gamma)} \\
&\leq C \norm{\eta}_{L^2(\Omega)} +  \norm{g_\Gamma}_{L^2(\Gamma)} \\
&\leq C(\norm{\eta}_{L^2(\Omega)}).
\end{align*}
Similarly using assumption (\ref{eqn:ygcond}) and the supremum norm error estimate (\ref{eqn:errinf}) we get
\begin{align}
\norm{w_\sigma^l}_{L^2(\Gamma)} &= \norm{S_h \eta - g_{\Gamma,\sigma}}_{L^2(\Gamma_\sigma)} \nonumber \\
&\leq \norm{S_h \eta}_{L^2(\Gamma_\sigma)} + \norm{g_{\Gamma,\sigma}^l}_{L^2(\Gamma)} \nonumber \\
&\leq C (\norm{S_h \eta}_\infty + 1) \nonumber \\
&\leq C(\norm{\eta}_{L^2(\Omega)}). \label{eqn:fiop}
\end{align}
Using (\ref{eqn:boundpoint2}), assumption (\ref{eqn:ygcond}) and the estimate (\ref{eqn:jjscov}) we get
\begin{align*}
\norm{w-w_\sigma^l}_{L^2(\Gamma)} &\leq \norm{S \eta - (S_h \eta)^l }_{L^2(\Gamma)} + \norm{ g_\Gamma - g_{\Gamma,\sigma}^l}_{L^2(\Gamma)} \\
&\leq \norm{S \eta - (S \eta)^l}_{L^2(\Gamma)} + \norm{(S \eta)^l - (S_h \eta)^l }_{L^2(\Gamma)} + C\sigma^2 \\
&\leq C (\sigma^2\norm{S \eta}_{W^{1,\infty}(\Omega)} + \norm{S \eta - S_h \eta}_\infty + \sigma^2) \\
&\leq C (\sigma^2 \norm{\eta}_{H^1(\Omega)} + \norm{S \eta - S_h \eta}_\infty + \sigma^2) \\
&\leq C(\norm{\eta}_{H^1(\Omega)})  (\norm{S \eta - S_h \eta}_\infty + \sigma^2).
\end{align*}
Combining these estimates with (\ref{eqn:lkjlkj}), the bound for the first term in the splitting becomes
\[
\abs{ \norm{w}^2_{L^2(\Gamma)} - \norm{w_\sigma^l}^2_{L^2(\Gamma)} } \leq C( \norm{\eta}_{H^1(\Omega)}) (\norm{S \eta - S_h \eta}_\infty + \sigma^2).
\]
We can bound the second term in the splitting using assumption (\ref{eqn:mcond}) and the estimate (\ref{eqn:fiop}). This completes the proof.
\end{proof}

\begin{remark}
\label{rem:fghjkl}
It is now clear why our assumptions involve $\sigma^2$ bounds as opposed to some other power. When $\sigma=h$, this rate of convergence will not dominate the $h^{2-\eps}$ supremum norm error estimate (\ref{eqn:lineinfest}) (for $n=2$), which we will use to bound $\norm{ S \eta - S_h \eta}_\infty$ on the right hand side of Lemma~\ref{lem:ndiff}. 

Note that if we take $\Gamma_\sigma=\Gamma$, $m_\sigma=m$ and $g_{\Gamma,\sigma}=g_\Gamma$ then the assumptions are trivially satisfied. We will later see that there are nontrivial definitions based on polyhedral interpolations of $\Gamma$ that satisfy the assumptions. If we use polyhedral approximations of $\Gamma$ that are not interpolating then these assumptions may not be satisfied.
\end{remark}

We are ready to use the approach from \cite{Deckelnick2007} and \cite{Leykekhman2013} to prove the following error estimate.

\begin{theorem}
\label{thm:mainline}
Suppose Assumption~\ref{ass:ass} holds. Let $u$ and $u_h$ be solutions of (\ref{eqn:controlprobline}) and (\ref{eqn:controlprob2}) respectively. If $\sigma \leq C h^{\frac{n}{4}}$ and $h$ is sufficiently small then for any $\eps>0$, 
\begin{equation*}
\norm { {u}-{u}_h }_{L^2(\Omega)} \leq  C \Bigg( \sigma + \begin{cases} C(\eps) h^{1-\eps} &n=2,\\
h^{\frac{3}{4}} &n=3 \\
\end{cases} \Bigg )
\end{equation*}
with $C$ independent of $\sigma$ and $h$.
\end{theorem}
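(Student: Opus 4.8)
The plan is to follow the now-standard variational-inequality testing argument (as in Deckelnick--Hinze / Leykekhman), splicing in the new hypersurface-approximation error terms controlled by Lemmas~\ref{lem:h01bound} and \ref{lem:ndiff}. First I would test the continuous optimality condition (\ref{eqn:sysa}) with $v=u_h$ and the discrete one (\ref{eqn:system2a}) with $v=u$, then add the two inequalities. This yields
\[
\nu \norm{u-u_h}_{L^2(\Omega)}^2 \le (p-p_h, u_h - u)_{L^2(\Omega)} + \big[\text{cross terms from }p\text{ vs }p_h\big],
\]
and the task is to rewrite the right-hand side using the adjoint equations (\ref{eqn:adjoint}) and (\ref{eqn:system2b}). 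The natural device is to introduce intermediate quantities: the exact adjoint $\tilde p := S^* f_{u_h}$ solving $a(v,\tilde p) = \int_\Gamma (Su_h - g_\Gamma) v \,\rd A$ with control $u_h$ plugged in, and possibly a Ritz-type discrete adjoint with the true data. Then $(p - p_h, u_h-u) = (p - \tilde p, u_h - u) + (\tilde p - p_h, u_h - u)$; the first piece is handled by the adjoint equations and the structure of the problem (it has a favourable sign up to terms in $\norm{S(u-u_h)}_{L^2(\Gamma)}$, which in turn is bounded via the trace inequality of Lemma~\ref{lem:trace} and continuity of $S$), and the second piece is the genuine discretisation error of the adjoint problem.

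Second, I would estimate $\norm{\tilde p - p_h}$ (or more precisely the quantity $(\tilde p - p_h, u_h-u)$ after testing) by comparing the two adjoint variational problems. Their right-hand sides differ in three ways: $S$ versus $S_h$ applied to $u_h$, $\Gamma$ versus $\Gamma_\sigma$, $m$ versus $m_\sigma$, and $g_\Gamma$ versus $g_{\Gamma,\sigma}$ — exactly the discrepancies quantified in Assumption~\ref{ass:ass} and in Lemma~\ref{lem:ndiff}. Using the coercivity of $a(\cdot,\cdot)$, the trace inequality, the lift estimate (\ref{eqn:jjscov}), the supremum-norm estimates of Corollary~\ref{cor:prelim}, and Lemma~\ref{lem:ndiff}, this error is controlled by $C(\norm{u_h}_{H^1(\Omega)})(\norm{S u_h - S_h u_h}_\infty + \sigma^2)$ plus standard finite element approximation error for the adjoint PDE itself (order $h$ in $H^1$, which after pairing against $u-u_h \in H^1_0(\Omega)$ and using its regularity from Corollary~\ref{cor:reg2} gives the stated rate). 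Here is where Lemma~\ref{lem:h01bound} enters: it bounds $\norm{u_h}_{H^1(\Omega)}$ and $\norm{p_h}_{H^1(\Omega)}$ by $Ch^{-n/2}\sigma^2$, which under the hypothesis $\sigma \le Ch^{n/4}$ stays bounded, so the constants $C(\norm{u_h}_{H^1(\Omega)})$ are genuinely uniform; I would apply this early to neutralise those dependencies.

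Third, I would collect the supremum-norm contributions. For $n=2$, $S u_h - S_h u_h$ is bounded using (\ref{eqn:lineinfest}) by $C(\eps)h^{2-\eps}\norm{u_h}_{H^1(\Omega)}$; but $\norm{u_h}_{H^1(\Omega)}$ is only bounded (not small), so this gives $C(\eps)h^{2-\eps}$, which dominates the target $h^{1-\eps}$ — wait, that is too good, so the binding term must actually come from the finite element error of the \emph{exact} adjoint: $\norm{p - (\text{its FE approximation})}$ paired against $u - u_h$. Writing $(\tilde p - p_h, u_h - u)$ and inserting $\pm$ the Galerkin projection $R_h \tilde p$, the term $(\tilde p - R_h\tilde p, u_h-u)$ is $O(h^2)$ by Aubin--Nitsche if $u-u_h$ were in $L^2$ only, but one typically gets $O(h)$ here — actually the $h^{1-\eps}$ rate traces to the low regularity of $\tilde p$ (its right-hand side is a surface measure, giving $\tilde p \in W^{1,s}$ for $s<n/(n-1)$ only), so the FE error of $\tilde p$ in a suitable norm is $O(h^{1-\eps})$ for $n=2$ and $O(h^{3/4})$ for $n=3$; these are exactly the rates in the statement, obtained from $W^{2,s}$/$L^\infty$ estimates of the type in Lemma~\ref{lem:prelim} applied to the adjoint. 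I would assemble everything, absorb the $\nu\norm{u-u_h}^2$-compatible terms (any term of the form $\norm{u-u_h}_{L^2(\Omega)} \cdot (\text{small})$) into the left side by Young's inequality, and read off the bound. The main obstacle is the careful bookkeeping of the adjoint comparison: isolating which error contributions can be given a favourable sign, tracking the regularity of the adjoint variable that produces the $h^{1-\eps}$ and $h^{3/4}$ rates, and verifying that every constant $C(\norm{u_h}_{H^1(\Omega)})$ is tamed by the $\sigma \le Ch^{n/4}$ hypothesis via Lemma~\ref{lem:h01bound} — the geometric/lift errors themselves, by contrast, are routine given Assumption~\ref{ass:ass} and Lemma~\ref{lem:ndiff}.
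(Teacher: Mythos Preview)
Your route via testing the two optimality conditions and decomposing $p-p_h$ through an intermediate adjoint $\tilde p$ is the classical strategy, but the paper takes a shorter path that avoids the adjoint comparison altogether. The paper works at the level of the reduced objective functionals: expanding $\hat J(u_h)-\hat J(u)$ and using (\ref{eqn:sysa}) to see that the cross terms have a favourable sign gives $\hat J(u_h)-\hat J(u)\ge \tfrac{\nu}{2}\|u-u_h\|_{L^2(\Omega)}^2$, and similarly $\hat J_h(u)-\hat J_h(u_h)\ge \tfrac{\nu}{2}\|u-u_h\|_{L^2(\Omega)}^2$. Adding these yields
\[
\nu\,\|u-u_h\|_{L^2(\Omega)}^2 \;\le\; |\hat J(u)-\hat J_h(u)| + |\hat J(u_h)-\hat J_h(u_h)|,
\]
and each term on the right is precisely a difference $\bigl|\,\|S\eta-g_\Gamma\|_{L^2(\Gamma)}^2-\|S_h\eta-g_{\Gamma,\sigma}\|_{\Gamma_\sigma}^2\bigr|$ with $\eta\in\{u,u_h\}$, bounded directly by Lemma~\ref{lem:ndiff} as $C(\|\eta\|_{H^1(\Omega)})\bigl(\|S\eta-S_h\eta\|_\infty+\sigma^2\bigr)$. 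Applying (\ref{eqn:lineinfest}) and using Lemma~\ref{lem:h01bound} together with $\sigma\le Ch^{n/4}$ to control $\|u_h\|_{H^1(\Omega)}$ gives a bound of order $h^{2-\eps}+\sigma^2$ ($n=2$) or $h^{3/2}+\sigma^2$ ($n=3$) on the \emph{squared} error; the claimed rates $h^{1-\eps}$ and $h^{3/4}$ come from the square root. No adjoint error estimate, Ritz projection, or Aubin--Nitsche duality is needed.

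This is exactly the point at which your third paragraph goes off track. You arrive at a contribution $C(\eps)h^{2-\eps}$, declare it ``too good'', and then search for a rate-limiting term in the finite-element error of the adjoint, invoking the low regularity $\tilde p\in W^{1,s}$ for $s<n/(n-1)$. That regularity statement is the one for Dirac point sources (codimension $n$), not for codimension-$1$ surface measures; here the adjoint right-hand side lies in $H^{-1}(\Omega)$ by the trace theorem and $\tilde p\in H^1_0(\Omega)$ --- the paper emphasises in its introduction that adjoint regularity is \emph{not} the difficulty in this problem. The $h^{2-\eps}$ you found is not too good: it bounds $\nu\|u-u_h\|^2$, not $\|u-u_h\|$, and the square root delivers exactly $h^{1-\eps}$. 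Your decomposition can in principle be pushed through (the pairing $(\tilde p-p_h,u_h-u)$ unwinds to a polarised version of the norm-difference in Lemma~\ref{lem:ndiff}), but the paper's objective-functional argument reaches the same endpoint with less bookkeeping and no detour through adjoint approximation.
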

\begin{proof}
Fix $\sigma_0$, $h_0 \in \R$ and suppose throughout this proof that $0<\sigma<\sigma_0$ and $0<h<h_0$.

First observe that
\begin{align} \label{eqn:jcomp}
\hat{J}(u_h)-\hat{J}(u) =& \frac{1}{2}\norm{Su_h-Su}^2_{L^2(\Gamma)} + \frac{\nu}{2} \norm{ u_h-u }^2_{L^2(\Omega)} \nonumber \\ &+ (Su_h-Su,Su-g_{\Gamma,\sigma})_{L^2(\Gamma)} + \nu (u,u_h-u) \nonumber \\
\geq& \frac{1}{2} \norm{Su_h-Su}^2_{L^2(\Gamma)} + \frac{\nu}{2} \norm{ u_h-u }^2_{L^2(\Omega)},
\end{align}
since the optimality conditions imply that
\[
(Su_h-Su,Su-g_{\Gamma,\sigma})_{L^2(\Gamma)} = a(Su_h-Su,p)=(u_h-u,p)\geq-\nu (u_h-u,u).
\]
Similarly
\begin{align} \label{eqn:jhcomp}
\hat{J}_h(u)-\hat{J}_h(u_h) \geq \frac{1}{2} \norm{S_h u-S_h u_h}^2_{L^2(\Gamma_\sigma)} + \frac{\nu}{2} \norm{ u-u_h }^2_{L^2(\Omega)}.
\end{align}
Adding these two relations we get\begin{align}
\nu \norm { {u}-{u}_h }_{L^2(\Omega)}^2 &\leq \hat{J}({u}_h) - \hat{J}({u}) -
  \hat{J}_h({u}_h) + \hat{J}_h({u}) \nonumber \\
&\leq \abs{   \hat{J}({u}) - \hat{J}_h({u}) } + \abs{ \hat{J}({u}_h) - \hat{J}_h({u}_h)}. \label{eqn:mkmk}
\end{align}
Lemma \ref{lem:ndiff} gives the estimate
\begin{align*}
\abs{ \hat{J}({u}) - \hat{J}_h({u}) } =& \abs{ \norm{S u-g_\Gamma}^2_{L^2(\Gamma)}-\norm{S_h u-g_{\Gamma,\sigma}}_{\Gamma_\sigma}^2 } \\
\leq& C(\norm{u}_{H^1(\Omega)}) \left ( \norm{S u-S_h u }_{\infty} + \sigma^2 \right ) \\
\leq& C \left ( \norm{S u-S_h u }_{\infty} + \sigma^2 \right )
\end{align*}
with $C$ independent of $\sigma$ and $h$. Now using the supremum norm error estimate (\ref{eqn:lineinfest}) we get that for any $\eps>0$,
\begin{equation}
\abs{ \hat{J}({u}) - \hat{J}_h({u}) } \leq C(\norm{u}_{H^1(\Omega)}) \Bigg(  \sigma^2 + \begin{cases} C(\eps) h^{2-\eps}\norm{u}_{H^1(\Omega)} &n=2,\\
h^{\frac{3}{2}}\norm{u}_{H^1(\Omega)} &n=3 \\
\end{cases} \Bigg ). \label{eqn:asdfg}
\end{equation}
The same approach gives the estimate
\begin{equation}
\abs{ \hat{J}({u}_h) - \hat{J}_h({u}_h)} \leq C(\norm{u_h}_{H^1(\Omega)}) \Bigg( \sigma^2 + \begin{cases} C(\eps) h^{2-\eps}\norm{u_h}_{H^1(\Omega)} &n=2,\\
h^{\frac{3}{2}}\norm{u_h}_{H^1(\Omega)} &n=3 \\
\end{cases} \Bigg ), \label{eqn:lkjhg}
\end{equation}
where the $\norm{u_h}_{H^1(\Omega)}$ term comes from using the supremum norm error estimate (\ref{eqn:lineinfest}). If we take $\sigma \leq Ch^{\frac{n}{4}}$ then by Lemma~\ref{lem:h01bound} we can bound $\norm{u_h}_{H^1(\Omega)}$ independently of $h$.

Combining (\ref{eqn:asdfg}) and (\ref{eqn:lkjhg}) with (\ref{eqn:mkmk}) completes the proof.
\end{proof}

\begin{remark}
We can compare this error estimate to those for analogous discretisations of the standard optimal control problem with an $L^2(\Omega)$ fidelity term and the point control problem considered in \cite{BrettPoint}.
\begin{itemize}
\item Standard control problem from \cite{Hinze2005}:
\begin{equation*}
\norm { {u}-{u}_h }_{L^2(\Omega)} \leq C h^2.
\end{equation*}
\item {Point control problem from \cite{BrettPoint}:} For any $\eps>0$, 
\begin{equation*}
\norm { {u}-{u}_h }_{L^2(\Omega)} \leq C \begin{cases} h^{1-\eps} &n=2,\\
h^{\frac{1}{2}-\eps} &n=3. \\
\end{cases}
\end{equation*}
\item {Control on surface problem (\ref{eqn:controlprobline}) discretised by (\ref{eqn:controlprob2}):} With $\sigma=h$, for any $\eps>0$, 
\begin{equation*}
\norm { {u}-{u}_h }_{L^2(\Omega)} \leq \begin{cases} C(\eps) h^{1-\eps} &n=2,\\
C h^{\frac{3}{4}} &n=3. \\
\end{cases}
\end{equation*}
\end{itemize}
\end{remark}

\subsection{Example definitions of $\Gamma_\sigma$, $m_\sigma$ and $g_{\Gamma,\sigma}$}

So far we have just stated properties that $\Gamma_\sigma$, $m_\sigma$ and $g_{\Gamma,\sigma}$ must have in order for Theorem~\ref{thm:mainline} to hold for the discrete problem (\ref{eqn:controlprobline}). We now give some definitions for these quantities that satisfy all the required properties. Different definitions will lead to discrete problems that are easier or harder to solve, and so the definitions we use in practice will depend on $\Gamma$ and $g_\Gamma$.
\subsubsection{Method 1}

Take the following definitions for $\Gamma_\sigma$, $m_\sigma$ and $g_{\Gamma,\sigma}$ in the discrete problem (\ref{eqn:controlprobline}):
\begin{itemize}
\item $\Gamma_\sigma := \Gamma$ i.e.\ do not approximate $\Gamma$.
\item $m_\sigma(w_\sigma,z_\sigma) := \int_\Gamma w_\sigma z_\sigma \rd A$. This trivially satisfies assumption (\ref{eqn:mcond}) since $w^l=w$ for $w \in L^2(\Gamma)$. 
\item $g_{\Gamma,\sigma}:=g_\sigma$. This trivially satisfies assumption (\ref{eqn:ygcond}).
\end{itemize}
Theorem~\ref{thm:mainline} holds since all the assumptions are satisfied.

We would typically take these choices when $\Gamma$ and $g_{\Gamma}$ have simple forms. For example, perhaps when $\Gamma$ is a straight line or circle and $g_\Gamma$ is piecewise constant function. In this case the integrals over $\Gamma$ of products of discrete functions and $g_\Gamma$ may be easy to compute. This would allow us to implement the numerical method described in Section~\ref{sec:num} exactly.

\begin{remark}
\label{rem:lnkpt}
In practice computing the required integrals over $\Gamma$ will be difficult, even when $\Gamma$ and $g_\Gamma$ are simple. One way to handle this is to use a quadrature in the implementation. See Section~\ref{sec:linkpoint} for a related discussion.
\end{remark}

\subsubsection{Method 2}

\label{ex:compg}

Suppose $g_\Gamma \in H^2(\Gamma)$ and take the following definitions for $\Gamma_\sigma$, $m_\sigma$ and $g_{\Gamma,\sigma}$ in the discrete problem (\ref{eqn:controlprobline}):
\begin{itemize}
\item Let each $\Gamma_\sigma$ consist of a union of finitely many closed $(n-1)$-simplices whose vertices lie on $\Gamma$ and form a conforming, shape regular triangulation $E_\sigma$ of size $\sigma$. By this we mean that $\sigma = {\max}_{E \in E_\sigma} \sigma(E)$ and for each element $E \in E_\sigma$ the quantity
\[
\max_{E \in E_\sigma} \kappa(E), \quad \kappa(E) := \frac{\sigma(E)}{\rho(E)}
\]
is uniformly bounded independently of $\sigma$. Here $\sigma(E)$ denotes the diameter of $E$ and $\rho(E)$ denotes the diameter of the largest ball contained in $E$. 

Let $\delta_\sigma$ denote the quotient between the smooth and discrete surface measures $\rd A$ on $\Gamma$ and $\rd A_\sigma$ on $\Gamma_\sigma$ i.e.\ $\delta_\sigma$ is defined by $\delta_\sigma \rd A_\sigma = \rd A$ and
\begin{align}
\int_{\Gamma_\sigma} w_\sigma \rd A_\sigma = \int_\Gamma w_\sigma^l \frac{1}{\delta_\sigma} \rd A \quad  \forall w_\sigma \in L^2(\Gamma_\sigma). \label{eqn:move}
\end{align}

For $\Gamma_\sigma$ defined as above, Lemma 4.1 in \cite{Dziuk2013} gives
\begin{equation}
\norm{1-\frac{1}{\delta_\sigma} }_{L^\infty(\Gamma)} \leq C \sigma^2, \label{eqn:errarea}
\end{equation}
and Lemma 4.2 in \cite{Dziuk2013} gives
\[
\norm{w_\sigma^l}_{L^2(\Gamma)} \leq C \norm{w_\sigma}_{L^2(\Gamma_\sigma)} \quad \forall w_\sigma \in L^2(\Gamma_\sigma)
\]
with $C$ independent of $\sigma$ and $w_\sigma$.

\item $m_\sigma(w_\sigma,z_\sigma) := \int_{\Gamma_\sigma} w_\sigma z_\sigma \rd A_\sigma$. Assumption (\ref{eqn:mcond}) holds, since (\ref{eqn:move}) and (\ref{eqn:errarea}) give that for $w_\sigma,z_\sigma \in L^2(\Gamma_\sigma)$,
\begin{align*}
\abs{ (w_\sigma^l,z_\sigma^l)-m_\sigma(w_\sigma,z_\sigma) } =& \abs{ \int_\Gamma w^l_\sigma z^l_\sigma \Big (1-\frac{1}{\delta_\sigma} \Big ) \rd A } \\ 
\leq& \norm{1-\frac{1}{\delta_\sigma}}_{L^\infty(\Gamma)} \norm{w^l_\sigma}_{L^2(\Gamma)} \norm{z^l_\sigma}_{L^2(\Gamma)} \\
\leq& C\sigma^2 \norm{w^l_\sigma}_{L^2(\Gamma)} \norm{z^l_\sigma}_{L^2(\Gamma)}. 
\end{align*}
 \item $g_{\Gamma,\sigma} := I_\sigma g_\Gamma$, where $I_\sigma$ is the Lagrange interpolation of $g_\Gamma \in H^2(\Gamma)$ onto 
\begin{equation*}
W_\sigma := \{ w_\sigma \in C(\Gamma_\sigma) : w_\sigma|_E \in P_1(E) \text{ for all } E \in E_\sigma  \},
\end{equation*}
the space of piecewise affine finite elements on $E_\sigma$. In particular, $I_\sigma(w) := (\tilde{I}_\sigma w^{-l})$ where $\tilde{I}_\sigma : C(\Gamma_\sigma) \to W_\sigma$ is the Lagrange interpolation operator. By $I_\sigma^l w$ denote $(I_\sigma w)^l$, then for $w \in H^2(\Gamma) \subset C(\Gamma)$ we have
\begin{equation*}
\norm{ w-I_\sigma^l w }_{L^2(\Gamma)} \leq C \sigma^2 \norm{ w }_{H^2(\Gamma)}
\end{equation*}
(see \cite{Dziuk1988} and \cite{Demlow2009}). So assumption (\ref{eqn:ygcond}) is satisfied if $g_\Gamma \in H^2(\Gamma)$.
\end{itemize}
Since all the assumptions are satisfied, Theorem~\ref{thm:mainline} holds. 

We may want to use these definitions of $\Gamma_\sigma$, $m_\sigma$ and $g_{\Gamma,\sigma}$ if $\Gamma$ has a complicated form. In this case it is likely to be hard to calculate integrals over $\Gamma$, which are required by our numerical method (described in Section~\ref{sec:num}). By approximating $\Gamma$ with a polygonal or polyhedral $\Gamma_\sigma$ we only need to compute integrals over straight lines or triangles, which is easier. Note that even if $g_\Gamma$ is quite simple, a complicated $\Gamma$ means that $g_{\Gamma}^l$ could be complicated. This is why we also define $g_{\Gamma,h}$ to be the above piecewise affine interpolation of $g_\Gamma$. Then the surface integrals that are needed for our numerical method simplify to integrals of products of piecewise linear functions over flat surfaces. These are fairly straightforward to calculate and implement.

\begin{remark}
There are a few natural approaches to defining an interpolating polygonal or polyhedral $\Gamma_\sigma$ (see Figure~\ref{fig:examgrids}). These different approaches lead to different challenges. For our numerics we will use approach (c) in the figure, which ensures $\Gamma_\sigma$ coincides with edges (for $n=2$) of $T_h$. This simplifies the calculation of integrals over $\Gamma_\sigma$, but constructing a suitable $T_h$ may be hard. It also effectively forces $\sigma=h$. 
\end{remark}

\begin{remark}
Theorem~\ref{thm:mainline} says that for $n=3$ we could take $\sigma=h^{\frac{3}{4}}$ without dominating the error from the discretisation of the state. We could make use of this if we were to instead use approach (a) in Figure~\ref{fig:examgrids}.
\end{remark}

\begin{figure}
\centering
\subfigure[Here we take an arbitrary interpolation of $\Gamma$. This does not have any relation to the triangulation $T_h$, so calculating integrals over $\Gamma_\sigma$ may be tricky.]{
\includegraphics[width=0.45\textwidth]{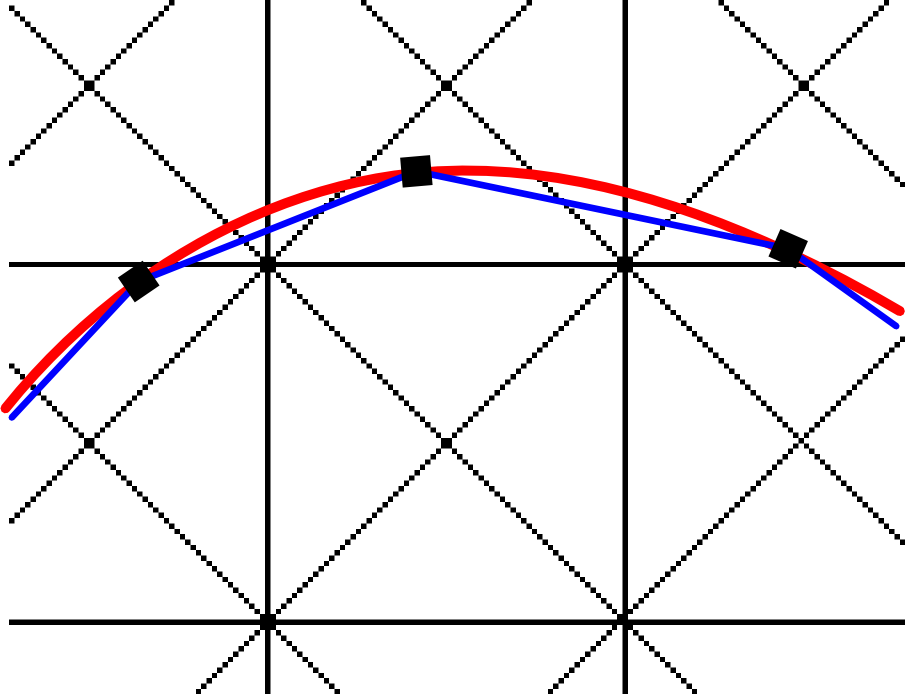}
\label{fig:examgrids1}
}
\subfigure[Here we construct the interpolation of $\Gamma$ using the triangulation $T_h$; the beginning and end of segments of $\Gamma_\sigma$ are given by the points where $\Gamma$ intersects the edges of $T_h$. This makes calculating integrals over $\Gamma_\sigma$ easier.
]{
\includegraphics[width=0.45\textwidth]{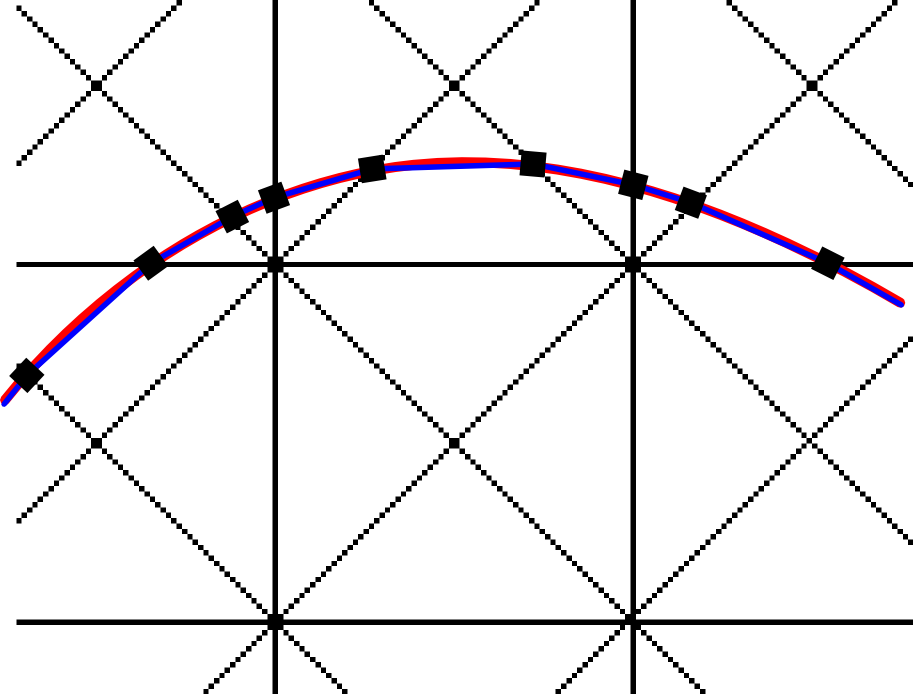}
\label{fig:examgrids2}
}
\subfigure[Here $\Gamma_\sigma$ is chosen first then $T_h$ is constructed to contain the segments of $\Gamma_\sigma$ as edges. This leads to the easiest calculation of integrals over $\Gamma_\sigma$, but constructing $T_h$ may be hard.]{
\includegraphics[width=0.45\textwidth]{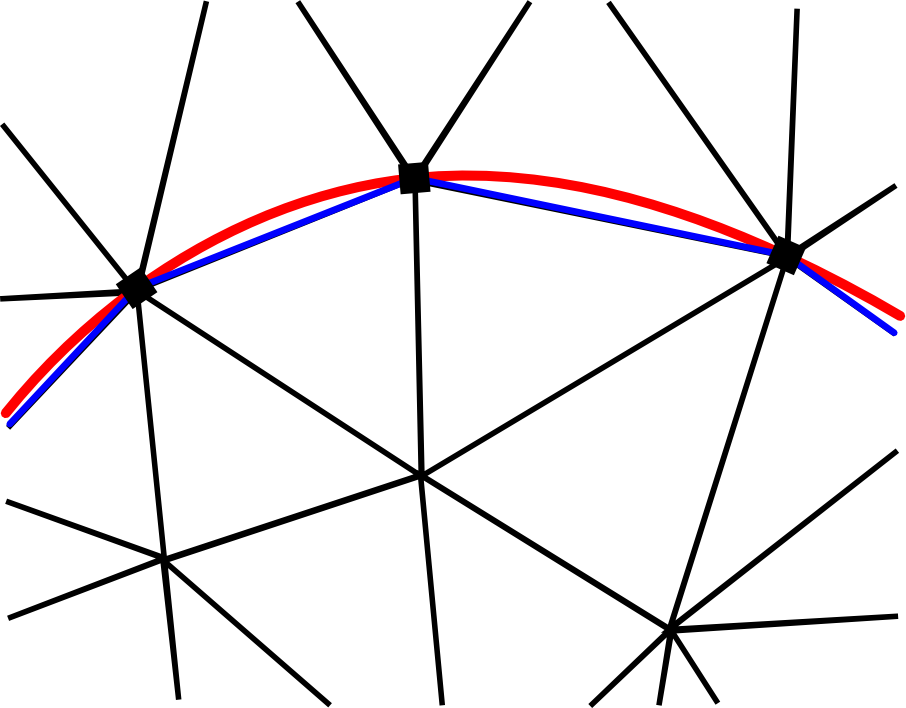}
\label{fig:examgrids3}
}
\caption[Examples of approximating hypersurfaces.]{An illustration of different constructions of $T_h$ and polygonal $\Gamma_\sigma$ for $n=2$. The black lines mark the triangulation $T_h$, the red curve is $\Gamma$ and the blue curve is $\Gamma_\sigma$. The square markers indicate the beginning and end of segments of $\Gamma_\sigma$. \label{fig:examgrids}}
\end{figure}

\subsection{Link to optimal control at points}
\label{sec:linkpoint}

Method~2 can be thought of as using a quadrature to approximate Method~1. Note that 
\[
w_\sigma := S_h \eta|_{\Gamma_\sigma} - I_h g_\Gamma \quad \forall \eta \in L^2(\Omega)
\]
is piecewise linear on $\Gamma_\sigma$. Therefore
\[
\norm{ w_\sigma }^2_{\Gamma_\sigma} = \int_{\Gamma_\sigma} w_\sigma^2  \rd A_h
\]
corresponds to integrating a piecewise quadratic function on $\Gamma_\sigma$. This can be computed exactly with a weighted sum of point evaluations. In particular, Method~2 can be equivalently written as a discrete point control problem:
\begin{equation*}
\min \frac{1}{2} \sum_{w \in I} \kappa_\omega \abs{ S_h \eta(\omega)-g_\omega}^2 + \frac{\nu}{2} \norm{\eta}^2_{L^2(\Omega)} \text{ over } \eta \in L^2(\Omega),
\end{equation*}
where $g_\omega := g_\Gamma(\omega)$, the set $I$ contains points in $\Gamma$, and $\kappa_\omega$ are weights. If we construct a triangulation that contains $\Gamma_\sigma$ as edges (i.e.\ use approach $(c)$ in Figure~\ref{fig:examgrids}) then $\abs{I}=O(\frac{1}{h})$ and the $\kappa_\omega$ are $O(h)$.

As Theorem~\ref{thm:mainline} holds using Method~2, we have provided an example of solutions to discrete point control problems that converge to the solution of a surface control problem.

\begin{remark}
\label{rem:weight}
We could also consider a weighted fidelity term for the surface control problem i.e.\ replace the fidelity term in (\ref{eqn:obj}) by
\[
\frac{1}{2} \int_\Gamma w (y-g_\Gamma)^2 \rd A,
\]
where $w \in L^\infty(\Omega)$ and $w \geq 0$. After the obvious modifications, all the results proved in this paper would still hold.
\end{remark}

\section{Numerical results}
\label{sec:num}

In this section we describe the numerical method we use to solve (\ref{eqn:controlprob2}) and show that the error estimate from Theorem~\ref{thm:mainline} for $n=2$ is observed in practice.

\subsection{Numerical method}
\label{sec:nummeth}

The numerical method is the same as the one described in \cite{BrettPoint} and \cite{BrettThesis} but for a discrete problem without a forcing term $f$ and the point evaluation term replaced by a surface integral term. If $u_h$ solves (\ref{eqn:controlprob2}), then by substituting $u_h = \mathbb{P}_{[a,b]}(-\frac{1}{\nu}p_h)$ we get that the state $y_h :=S_h u_h \in V_h$ and the adjoint variable $p_h \in V_h$ solve
\begin{equation}
\label{eqn:newtonfunpoint}
\left( \begin{array}{c} 
a(y_h, v_h ) - (-\frac{1}{\nu} p_h + (a + \frac{1}{\nu} p_h )^+ - (-\frac{1}{\nu} p_h-b )^+, v_h ) \\
 a(v_h,p_h) - m_\sigma( y_h|_{\Gamma_\sigma}-g_{\Gamma,\sigma}, v_h|_{\Gamma_\sigma}) \end{array} \right) = 0 \quad \forall v_h \in V_h,
\end{equation}
for all $v_h,w_h \in V_h$. Here $v^+$ denotes the nonnegative part of $v$ i.e.\ $\max(0,v)$. Once this problem has been solved, the $u_h$ solving (\ref{eqn:controlprob2}) can easily be determined from $p_h$ by setting $u_h=\mathbb{P}_{[a,b]}\big (-\frac{1}{\nu} p_h \big)$. 

We use a semismooth Newton method to solve the above system, but we will not describe the algorithm in detail as it follows from only minor modifications to the one in \cite{BrettPoint} and \cite{BrettThesis}. We implemented the algorithm for $n=2$ in the Distributed and Unified Numerics Environment (DUNE) using DUNE-FEM (see \cite{dunegridpaperI:08, dunegridpaperII:08, dunefempaper:10}). This environment has the advantage that once an algorithm has been implemented, it is straightforward to change features of the implementation that would usually be fixed. For solving the linear systems for each iteration of the Newton method we used the biconjugate gradient stabilised method with an incomplete LU factorisation or Gauss-Seidel preconditioner. We do not implement the $n=3$ as this would be more complicated.

Depending on the example we are considering, we may either use Method~1 or Method~2 to choose $\Gamma_\sigma$, $m_\sigma$ and $g_{\Gamma,\sigma}$. When using Method~2 we will use the approach from Figure~\ref{fig:examgrids3} and construct the triangulation $T_h$ from $\Gamma_\sigma$: We first find a polygonal curve $\Gamma_\sigma$ with segments of length $h$ (i.e.\ we take $\sigma=h$), 
and then use the program Triangle (see \cite{Shewchuk}) to construct a triangulation $T_h$ of size $h$ that contains the segments of $\Gamma_\sigma$ as edges.

\subsection{Examples}

In all our examples we will take $A=-\Laplace$ and $\sigma=h$. We first solve two simple examples on a $\Gamma$ that is a straight line. 

\begin{example} 
\label{ex:ex1point}
$\Omega=(0,1)^2$, $\Gamma = \{ (0.25+0.5t, 0.5) : t \in (0,1) \}$,
$g_\Gamma(x_1,x_2) =\sin(3 \pi x_1)$, $\nu=1e-2$, $b=-a=\infty$.
\end{example}

\begin{example}
\label{ex:ex2}
The same as Example~\ref{ex:ex1point} but with
\[
g_\Gamma(x_1,x_2) = \begin{cases}1 & x <0, \\
-1 & x \geq 0 \\
\end{cases}
\]
and $b=-a=5$.
\end{example}

Example~\ref{ex:ex1point} has a smooth but nonconstant $g_\Gamma$ and no control constraints. Its solution can be seen in Figure~\ref{fig:ypline}. Example~\ref{ex:ex2} has a discontinuous $g_\Gamma$ and active control constraints. Its solution can be seen in Figure~\ref{fig:linecon}. Even for these simple examples the exact solution is not known explicitly, so we compute $L^2(\Omega)$ errors against discrete solutions on fine triangulations to get approximate experimental orders of convergence (EOCs). In particular we use
\begin{equation*}
\mathrm{EOC}_h = \frac{\log (\norm{\tilde{u}-u_{h/2}}_{L^2(\Omega)}/\norm{\tilde{u}-u_h}_{L^2(\Omega)})}{\log 2}
\end{equation*}
with $h_{\text{fine}}=0.00276214$, which corresponds to 263169 DOFs. The approximate EOCs for these examples are in Tables~\ref{tab:eocs} and \ref{tab:linecon}. They agree with the error estimate we proved in Theorem~\ref{thm:mainline} for $n=2$.  We do not verify this error estimate for examples with curved $\Gamma$: With our approach of constructing triangulations $T_h$ that coincide with $\Gamma_\sigma$, the resulting $T_h$ for a small $h$ will not in general be a refinement of a $T_h$ for a larger $h$. This makes it challenging to compute $L^2(\Omega)$ errors.

\begin{figure}
\centering
\subfigure[Illustration of $\Gamma$.]{
\includegraphics[width=0.3\textwidth]{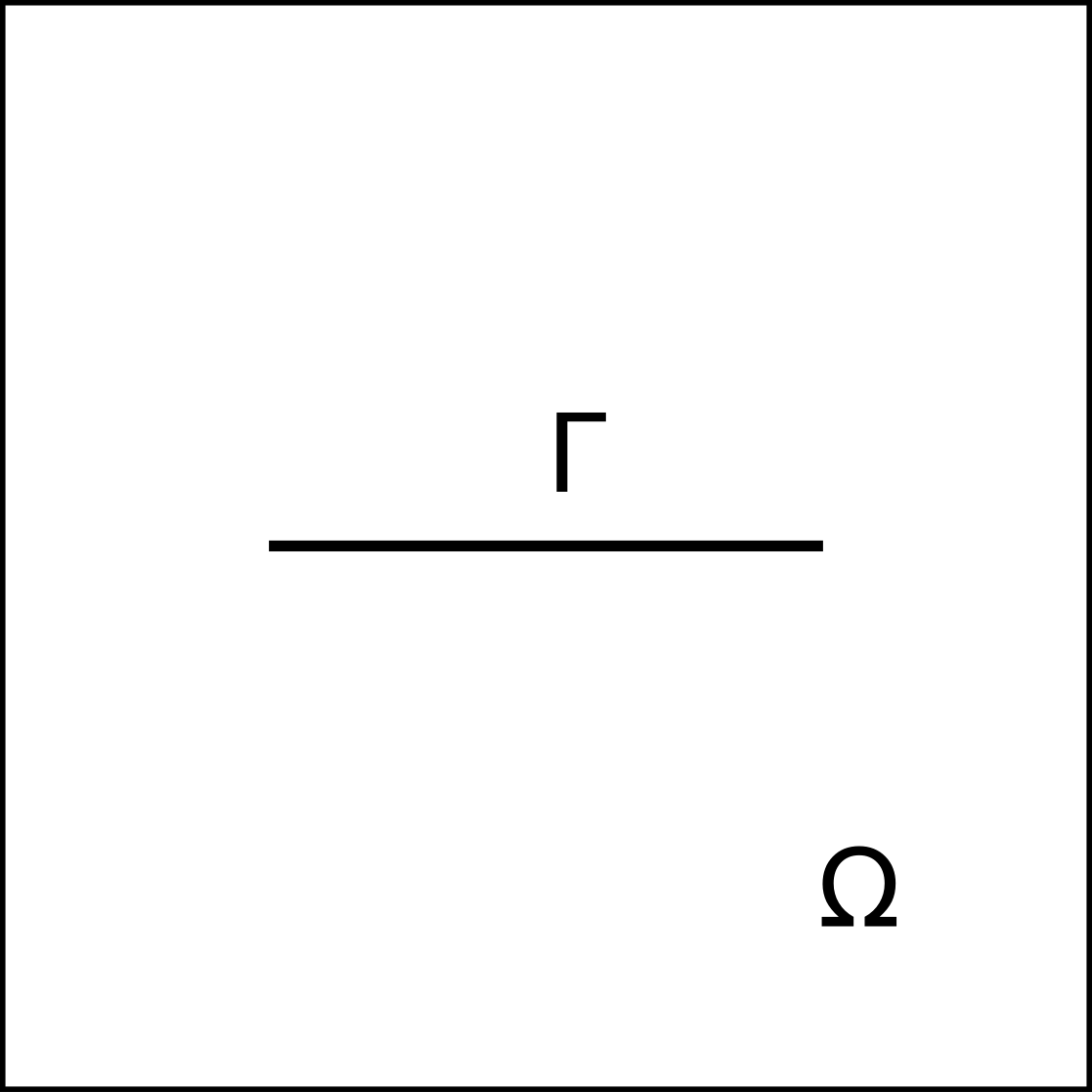}
}
\subfigure[Surface plot of $y_h$.]{
\includegraphics[width=0.4\textwidth]{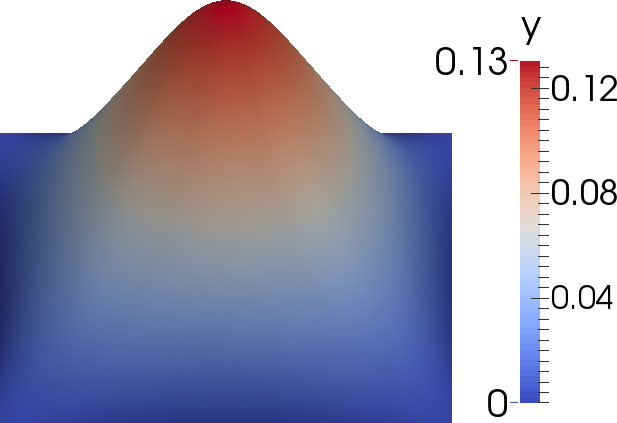}
}
\subfigure[Surface plot of $u_h= -\frac{1}{\nu}p_h$ with the $x_1$-axis at the bottom.]{
\includegraphics[width=0.41\textwidth]{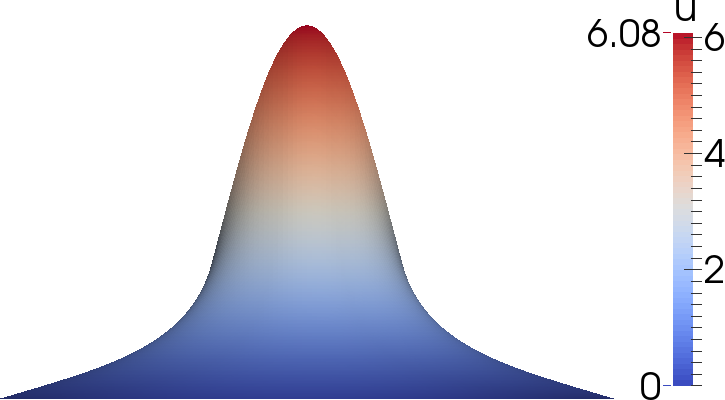}
}
\subfigure[Surface plot of $u_h= -\frac{1}{\nu}p_h$ with the $x_2$-axis at the bottom.]{
\includegraphics[width=0.4\textwidth]{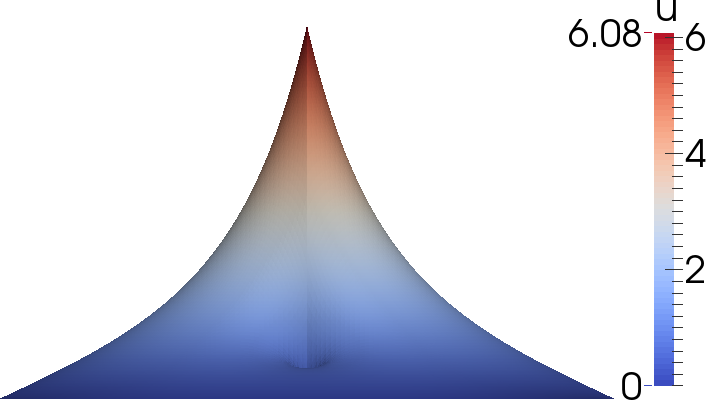}
}
\caption[Solution of a PDE surface control problem (no control constraints).]{The solution to Example~\ref{ex:ex1point}. We use $\Gamma_\sigma$, $m_\sigma$ and $g_{\Gamma,\sigma}$ defined by Method~2, even though we do not need to approximate $\Gamma$, as interpolating $g_\Gamma$ simplifies the implementation.}
\label{fig:ypline}
\end{figure}

\begin{figure}
\centering
\subfigure[Illustration of $\Gamma$.]{
\includegraphics[width=0.3\textwidth]{simpleline}
} 
\subfigure[Surface plot of $y_h$.]{
\includegraphics[width=0.42\textwidth]{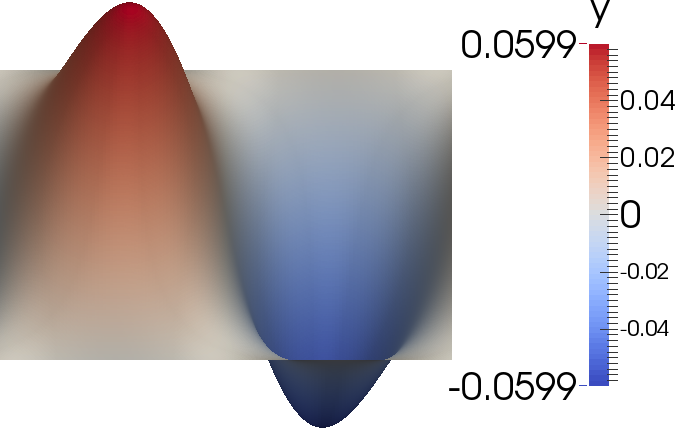}
}
\subfigure[Surface plot of $u_h= \mathbb{P}_{[a,b]}\big( -\frac{1}{\nu}p_h \big )$ with the $x_1$-axis at the bottom.]{
\includegraphics[width=0.4\textwidth]{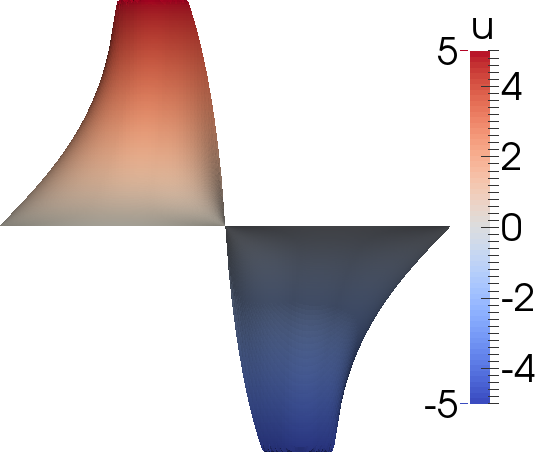}
}
\subfigure[Surface plot of $u_h= \mathbb{P}_{[a,b]}\big( -\frac{1}{\nu}p_h \big )$ with the $x_2$-axis at the bottom.]{
\includegraphics[width=0.4\textwidth]{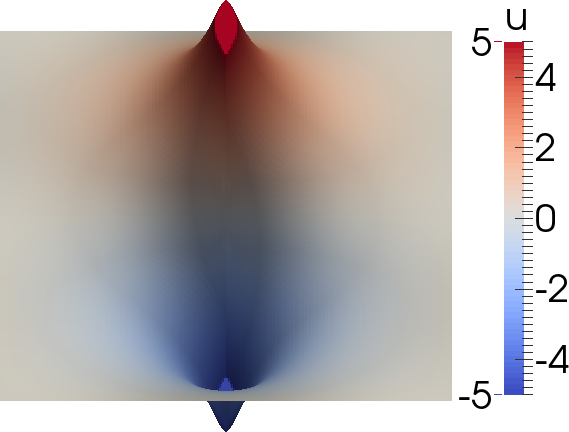}
}
\caption[Solution of a PDE surface control problem (control constraints)]{The solution to Example~\ref{ex:ex2}. We use $\Gamma_\sigma$, $m_\sigma$ and $g_{\Gamma,\sigma}$ defined by Method~1, as it is easy to integrate discrete functions against $g_{\Gamma,\sigma}$ along $\Gamma$. The figure can be interpreted in the same way as Figure~\ref{fig:ypline}.}
\label{fig:linecon}
\end{figure}

\begin{table}
\begin{center}
\begin{tabular}{ c | c | c | c }
h & DoFs & $\norm{u_h-\tilde{u}}_\Omega$ & EoC \\
\hline
0.353553 & 25 & 0.92096 & - \\
0.176777 & 81 & 0.413261 & 1.1561 \\
0.0883883 & 289 & 0.193377 & 1.0956 \\
0.0441942 & 1089 & 0.0967977 & 0.9984 \\
0.0220971 & 4225 & 0.0482398 & 1.0047 \\
0.0110485 & 16641 & 0.0235625 & 1.0337 \\
\end{tabular}
\end{center}
\caption[EOCs for PDE surface control problem (no control constraints).]{EOCs for Example \ref{ex:ex1point} (which has no active control constraints).}
\label{tab:eocs}
\end{table}

\begin{table}
\centering
\begin{tabular}{c|c|c|c}
$h$ & \# DoFs & $\norm{u-u_h}_{L^2(\Omega)}$ & $\text{EOC}_h$ \\ 
\hline
0.353553 & 25 & 0.991883 & 0 \\
0.176777 & 81 & 0.544039 & 0.86646341 \\
0.0883883 & 289 & 0.292202 & 0.89674110 \\
0.0441942 & 1089 & 0.146281 & 0.99822529 \\
0.0220971 & 4225 & 0.0741029 & 0.98114049 \\
0.0110485 & 16641 & 0.0363584 & 1.0272346 \\
\end{tabular}
\caption[EOCs for PDE surface control problem (control constraints).]{EOCs for Example \ref{ex:ex2} (which has active control constraints).}
\label{tab:linecon}
\end{table}
%

In comparison to solutions of point control problems, the solutions of these line control examples appear to have bounded $p$ (and hence also $u$). An interesting feature of the solutions are the ridges in $p_h$ and $u_h$ along $\Gamma$. Observe that in the above examples $y_h|_\Gamma$ does not get close to $g_\Gamma$ because $\nu=1e-2$ is too large, especially when there are control constraints. In the next examples we take $\nu=1e-4$ and observe that we can get close agreement between $ y_h|_{\Gamma}$ and $g_\Gamma$. In the remaining examples the only variable that will change is $\Gamma$.

\begin{example}
\label{ex:ex3}
$\Omega=(0,1)^2$, 
\[
\Gamma = \{ ( 0.5 + 0.327t\sin t, 0.5+0.327t \cos t )  :  t \in (0,3.159) \},
\]
(i.e.\ a spiral), $g_\Gamma=1$, $\nu=1e-4$, $b=-a=\infty$.
\end{example}


\begin{example}
\label{ex:ex5}
The same as Example~\ref{ex:ex3} but with a multi-component $\Gamma$ having the spoke like structure marked by the black lines in Figure~\ref{fig:spokelines}.
\end{example}

In Example~\ref{ex:ex3} $\Gamma$ is curved. As described in Section~\ref{sec:num},  we first construct a $\Gamma_\sigma$ that interpolates $\Gamma$, then create a triangulation that coincides with $\Gamma_\sigma$. To illustrate this a possible (but coarse) triangulation for the spiral shaped $\Gamma$ from Example~\ref{ex:ex3} is shown in Figure~\ref{fig:domain}. In Example~\ref{ex:ex5} the spoke like $\Gamma$ is formed from a $\Gamma$ consisting of multiple connected components; in particular, 6 open lines originating from the point $(0.5,0.5)$.  

Solutions to Examples~\ref{ex:ex3} and \ref{ex:ex5} can be seen in Figures~\ref{fig:spiral} and \ref{fig:spoke}. These were computed with $h=0.00292967$ and $\# \mathrm{DOFs} \approx 70000$. Observe that for this small value of $\nu=1e-4$, the values of $y_h|_\Gamma$ are close to $g_\Gamma =1$.

\begin{figure}
\centering
\subfigure{
\includegraphics[width=0.32\textwidth]{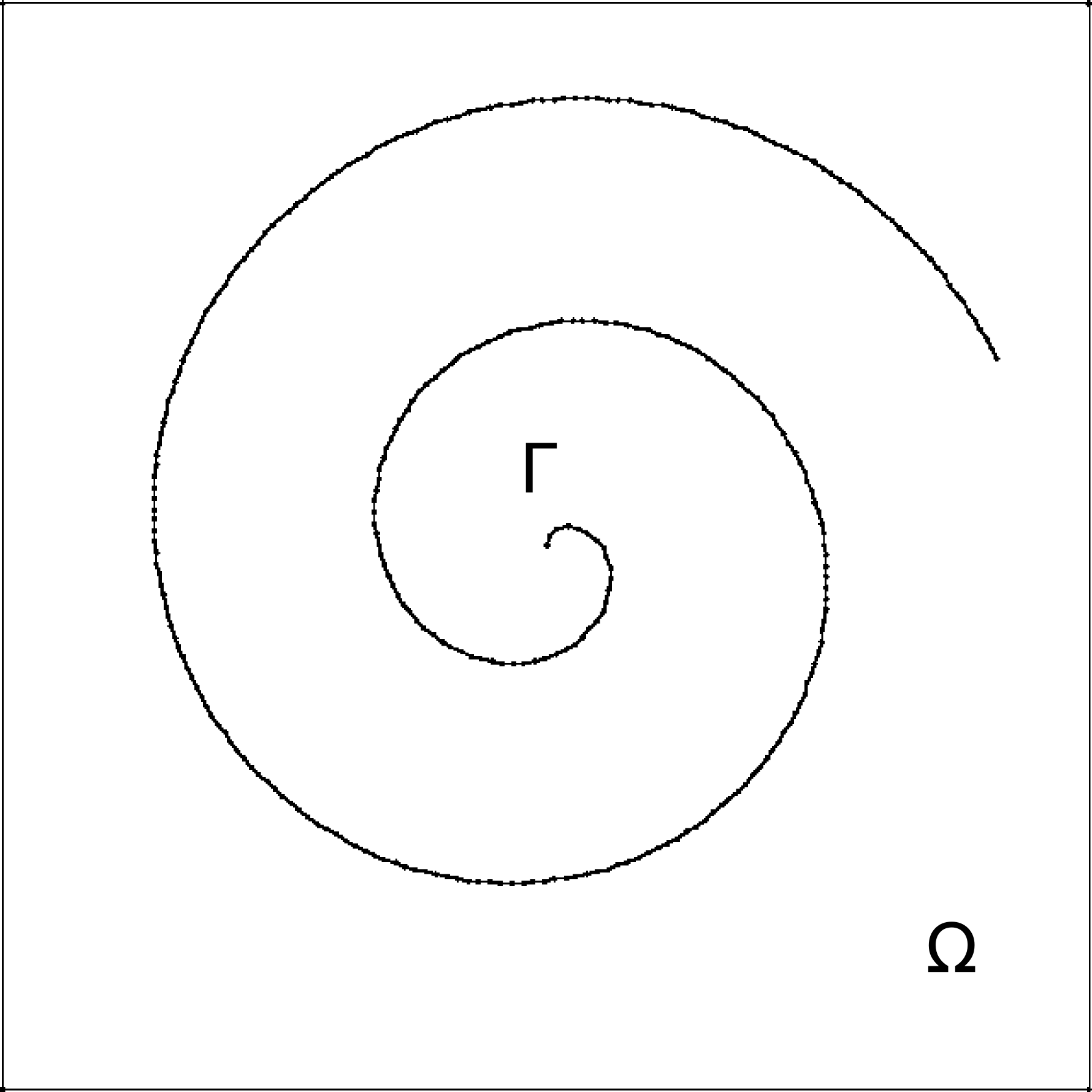}
\label{fig:dom1}
}
\subfigure{
\includegraphics[width=0.325\textwidth]{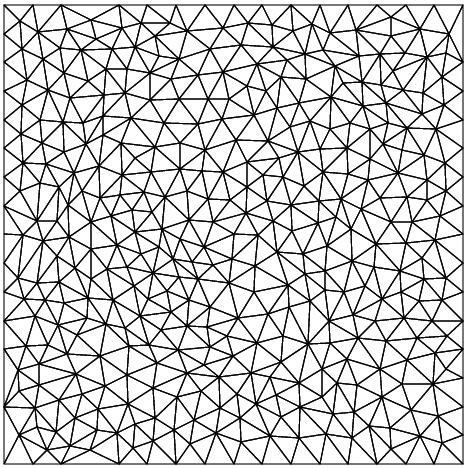}
\label{fig:dom2}
}
\caption[Triangulation containing an approximating hypersurface.]{$\Gamma$ as defined in Example \ref{ex:ex3} and a triangulation whose edges contain an interpolating polygonal approximation of it. }
\label{fig:domain}
\end{figure}

 
\begin{figure}
\centering
\subfigure[Surface plot of $y_h$.]{
\includegraphics[width=0.4\textwidth]{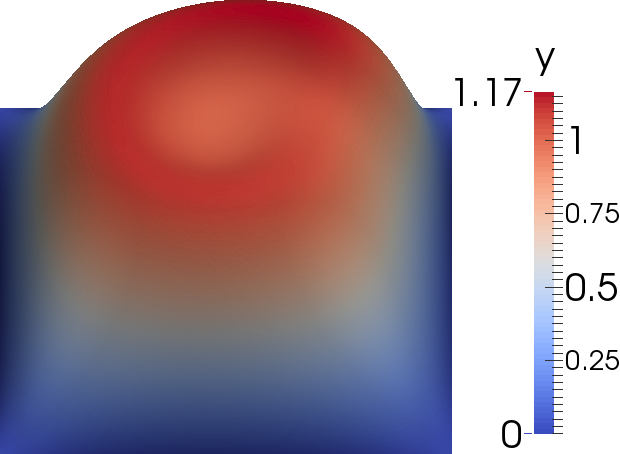}
}
\subfigure[Surface plot of $u_h=-\frac{1}{\nu}p_h$.]{
\includegraphics[width=0.4\textwidth]{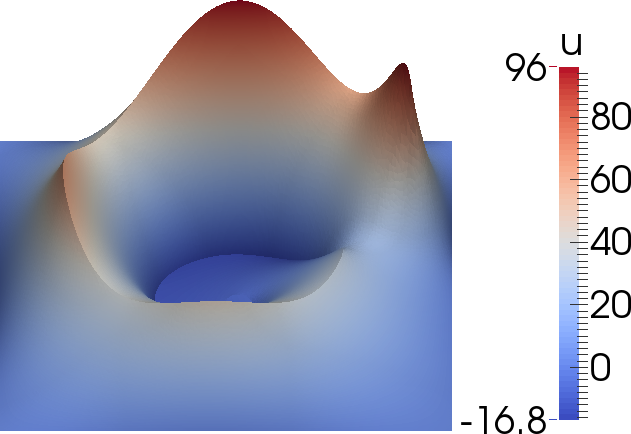}
}
\subfigure[Colour map of $y_h$.]{
\label{fig:ppp}
\includegraphics[width=0.4\textwidth]{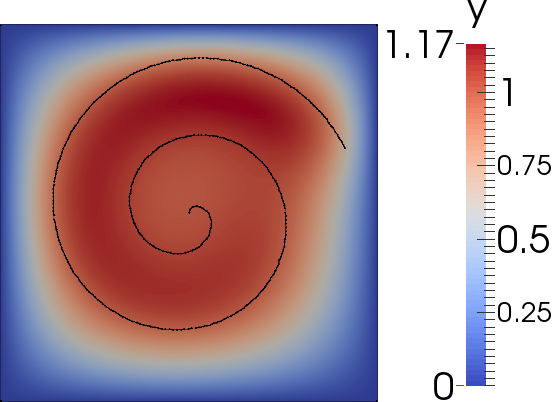}
}
\subfigure[Colour map of $u_h=-\frac{1}{\nu}p_h$.]{
\label{fig:ppp2}
\includegraphics[width=0.4\textwidth]{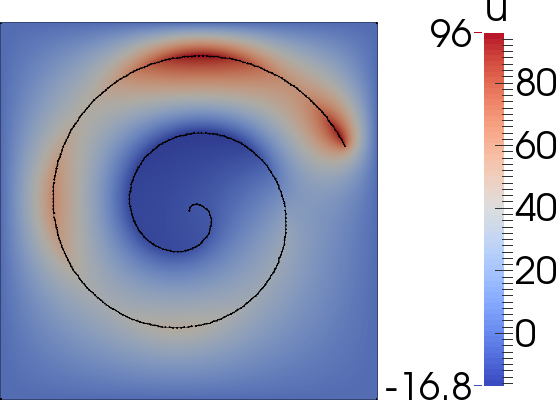}
}
\caption[Solution of a PDE surface control problem ($\Gamma$ a spiral).]{The solution to Example~\ref{ex:ex3}. We use $\Gamma_\sigma$, $m_\sigma$ and $g_{\Gamma,\sigma}$ defined by Method~2. The black curve is $\Gamma$.
}
\label{fig:spiral}
\end{figure} 

\begin{figure}
\centering
\subfigure[Surface plot of $y_h$.]{
\includegraphics[width=0.4\textwidth]{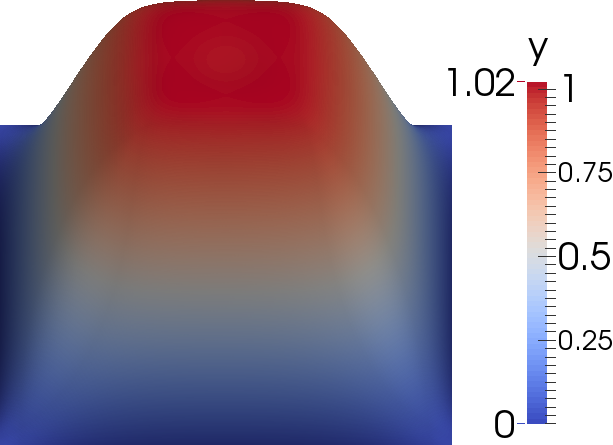}
}
\subfigure[Surface plot of $u_h=-\frac{1}{\nu}p_h$.]{
\includegraphics[width=0.4\textwidth]{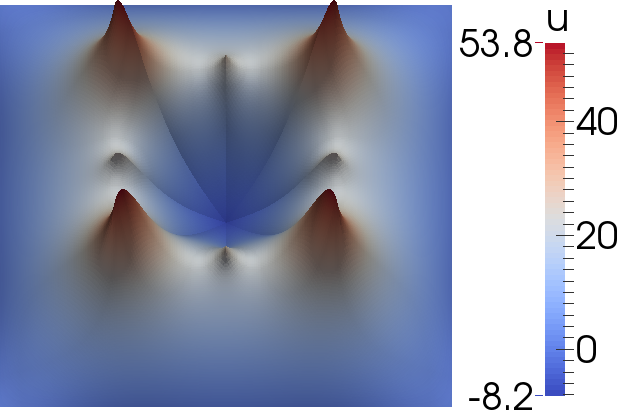}
}
\subfigure[Colour map of $y_h$.]{
\includegraphics[width=0.41\textwidth]{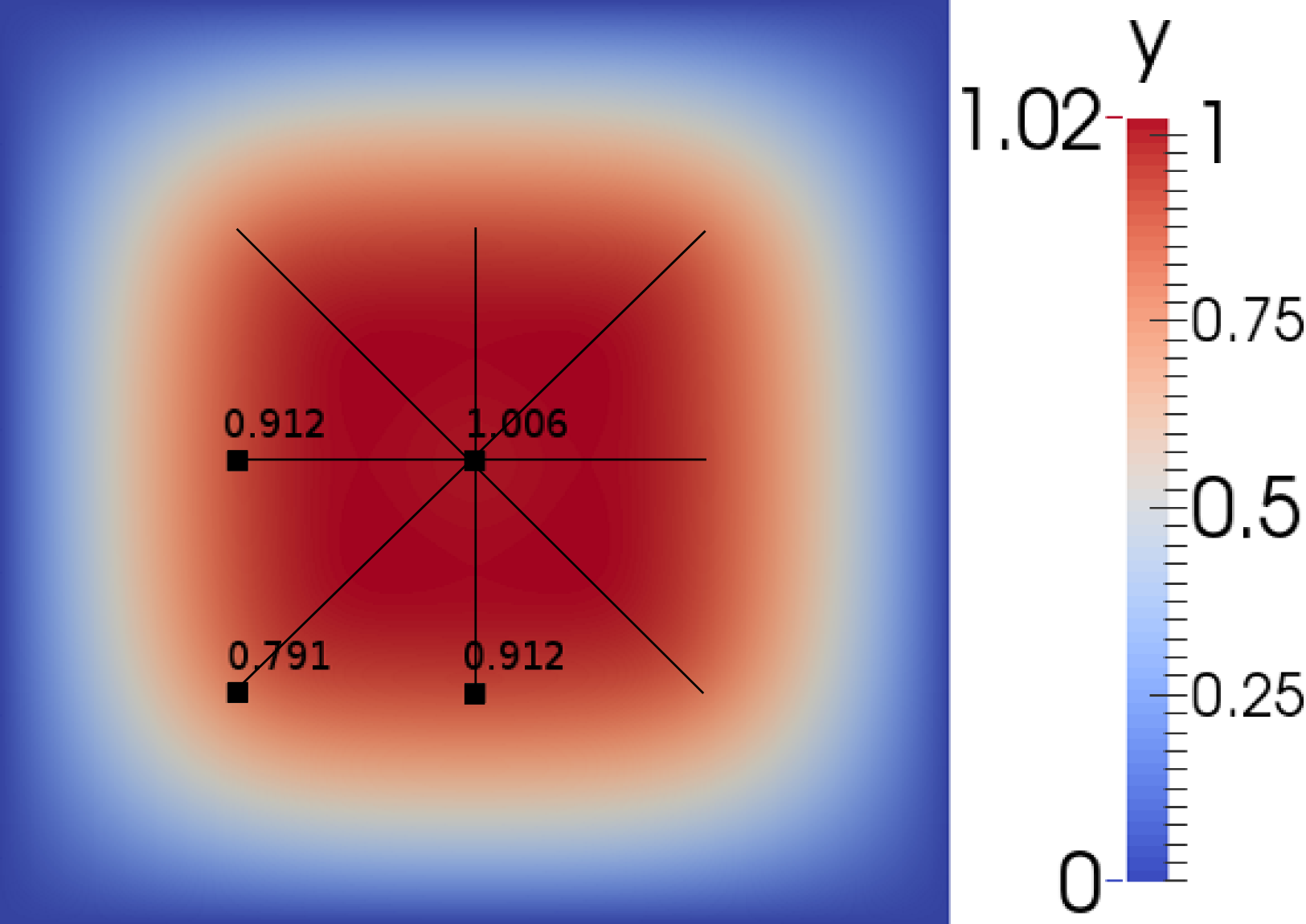}
\label{fig:spokelines}
}
\subfigure[Colour map of $u_h=-\frac{1}{\nu}p_h$.]{
\includegraphics[width=0.4\textwidth]{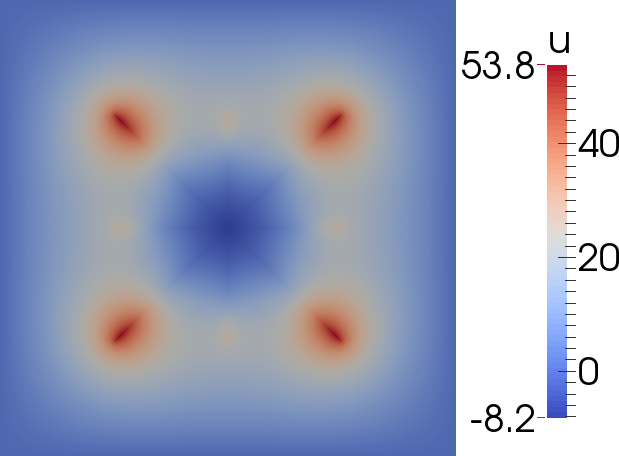}
}
\caption[Solution of a PDE surface control problem ($\Gamma$ has spokes).]{The solution to Example~\ref{ex:ex5}. We use $\Gamma_\sigma$, $m_\sigma$ and $g_{\Gamma,\sigma}$ defined by Method~1. The black curve is $\Gamma$ and the dots and numerical values indicate the value of $y_h$ at certain points on $\Gamma$. Not many points are included due to the symmetry of the solution. 
}
\label{fig:spoke}
\end{figure}

\subsection{Comparison to optimal control at points}

\label{sec:knvc}
To finish we compare the solution of the line problem from Example~\ref{ex:ex3} (shown in Figure~\ref{fig:spiral}) with the following point control problem.

\begin{example}
\label{ex:pointspiral}
$\Omega=(0,1)^2$, 
\[
\Gamma = \{ ( 0.5 + 0.327t\sin t, 0.5+0.327t \cos t )  :  t \in (0,3.159) \},
\]
(i.e. a spiral), $I$ is a set of $41$ evenly spaced points along $\Gamma$, $g_\omega=1$ for all $\omega \in I$, $\nu=1e-4$, $b=-a=\infty$.
\end{example}

The theory for such problems is covered in \cite{BrettPoint}. Note that we take the same parameter values as for the line problem except instead of a prescribed function $g_\Gamma = 1$, we have prescribed values of $g_\omega = 1$ at points along $\Gamma$. The solution of this problem can be seen in Figure~\ref{fig:pointspiral}.

We see in Figure~\ref{subfig:comp} that the point problem gets $y_h|_\Gamma$ closer to $1$ than the line problem. However this is at the cost of $\norm{u_h}_{L^2(\Omega)} = 36.5414$ for the point problem compared to $\norm{u_h}_{L^2(\Omega)} = 28.0718$ for the line problem, and a what appears to be unbounded $\norm{u}_\infty$.

\begin{figure}
\centering
\subfigure[Colour map of $y_h$. The black line is the curve $\Gamma$ along which the points are evenly distributed.]{
\includegraphics[width=0.39\textwidth]{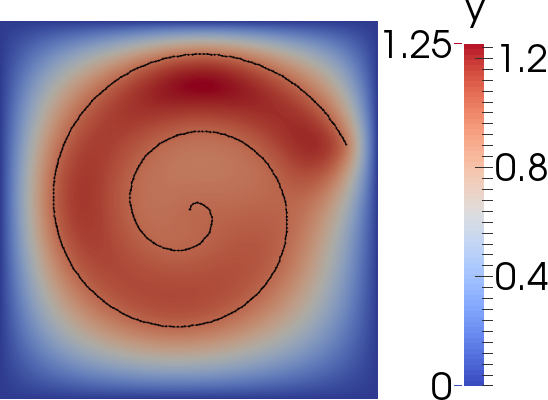}
}
\subfigure[Surface plot of $u_h=-\frac{1}{\nu}p_h$.]{
\includegraphics[width=0.4\textwidth]{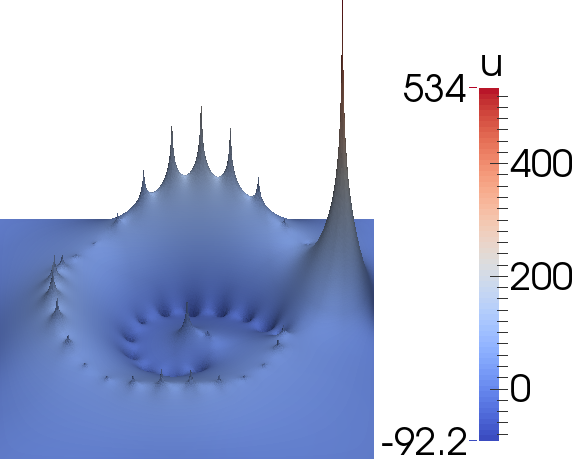}
}
\subfigure[Comparison of $y_h$ from this point control problem evaluated on $I$, against $y_h$ from the line control problem of Example~\ref{ex:ex3} (shown in Figure~\ref{fig:spiral}) evaluated along $\Gamma$ starting from the centre. 
]{
\includegraphics[width=0.38\textwidth]{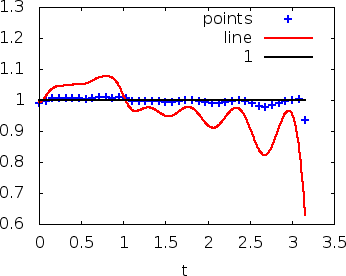}
\label{subfig:comp}
}
\caption[Solution of a PDE point control problem (points on a spiral).]{The solution of Example~\ref{ex:pointspiral}. Compare to Figure~\ref{fig:spiral}. }
\label{fig:pointspiral}
\end{figure} 

Recall our observation from Section~\ref{sec:linkpoint} that solutions of appropriately weighted discrete point control problems converge to the solution of a surface control problem. The points and weights we mentioned 
arose from Method~2. A simpler approach, which nevertheless works well in practice, is to choose an arbitrary triangulation of size $h$, then take $\lceil \frac{\abs{\Gamma}}{h} \rceil$ (where $\lceil \cdot \rceil$ denotes the ceiling function) evenly spaced points along $\Gamma$ and weight them by $h$. Given an arclength parameterisation of $\Gamma$, it is straightforward to adapt the implementation in \cite{BrettPoint} to do this. 

The solution to the point control problem resulting from this approach is almost indistinguishable to the solution of the line control problem using Method~2, so we do not include it in a figure. A minor difference is that the ridge in $u_h$ is slightly jagged, as the edges of the triangulation do not necessarily align with it, but as $h$ is reduced this effect disappears.



\bibliographystyle{siam}
\bibliography{library}

\end{document}